\renewcommand{\S}{\mathcal{S}}
\newtheorem{theorem}{Theorem}[section]
\newtheorem{proposition}[theorem]{Proposition}
\newtheorem{lemma}[theorem]{Lemma}
\newtheorem{conjecture}[theorem]{Conjecture}
\theoremstyle{definition}
\newtheorem{question}[theorem]{Question}
\title{Pattern-restricted permutations of small order}
\author[1]{Kassie Archer}
\author[1]{Robert P. Laudone}
\affil[1]{{\small Department of Mathematics, United States Naval Academy, Annapolis, MD, 21402}}
\affil[ ]{{\small Email: karcher@usna.edu, laudone@usna.edu}}
\date{}
\begin{document}

\maketitle

\begin{abstract}
In this paper, we enumerate 132-avoiding permutations of order 3 in terms of the Catalan and Motzkin generating functions, answering a question of B\'{o}na and Smith from 2019. We also enumerate 231-avoiding permutations that are composed only of 3-cycles, 2-cycles, and fixed points. 
\end{abstract}

\section{Introduction}

We say a permutation $\pi=\pi_1\pi_2\ldots\pi_n\in\S_n$, written in its one-line notation, contains a pattern $\sigma=\sigma_1\sigma_2\ldots\sigma_k\in\S_k$ if there is a subsequence of $\pi$ that appears in the same relative order as $\sigma$. For example, the permutation $\pi=2371645$ contains the pattern 132 since 364 is a subsequence of $\pi$ that is in the same relative order as 132. If a permutation $\pi$ does not contain a pattern $\sigma$, then we say $\pi$ avoids $\sigma.$ For example, the permutation $\pi=24152673$ avoids the pattern $321$ since no subsequence of $\pi$ of size 3 appears in decreasing order. 

Investigating the cycle type (and more generally, any algebraic properties) of pattern-avoiding permutations has been the topic of many recent papers, but proves difficult in general. There are many results about pattern-avoiding involutions (see \cite{DRS07,GM02,SS85}), which are composed of only 2-cycles and fixed points. This is primarily due to a very nice geometric symmetry involutions have when drawing the diagram of a permutation, as well as their nice behavior under RSK (in the case of $\sigma=123$). For permutations avoiding pairs of permutations, results are known for permutations composed of a single $n$-cycle (with the exception of a single pair) \cite{BC19}. There are few sets of patterns for which a complete understanding of the cycle decomposition of permutations avoiding that set is understood; one such example is the set $\{213,312\}$ \cite{G01,T01}. 

In \cite{BS19}, B\'{o}na and Smith consider permutations with the property that both the permutation and its square avoid a given pattern. They end the paper asking the following question: How many permutations avoiding 132 are of order 3 (i.e., that are composed only of 3-cycles and fixed points)? Since that paper, some related results have been found. The enumeration of permutations composed only of 3-cycles and avoiding a single pattern are found in \cite{AG21}, while permutations avoiding 231 composed only of 3-cycles and fixed points can be found in \cite{BD19}.

In this paper, we answer the question B\'{o}na and Smith posed, finding a generating function in terms of the generating functions for the Motzkin and Catalan numbers (see Theorem~\ref{thm:132-main}). 
In particular, we find that the generating function for order 3 permutations avoiding 132 is given by
\[
    A^{\{1,3\}}_{132}(z) = \dfrac{c(z^3)}{\sqrt{c(z^3)(4-3c(z^3))} - zc(z^3)},
    \]
    where $c(x)$ is the generating function for the Catalan numbers.

We additionally find a generating function enumerating permutations avoiding 231 composed only of cycles of size 1, 2, or 3 (see Table~\ref{tab:231} and Theorem~\ref{thm:231-main}), and refine this with respect to the number of cycles of each size.

\section{Background and notation}

Let $[n]=\{1,2,3,\ldots, n\}$ and let $\S_n$ denote the symmetric group on $[n]$. We write a permutation $\pi\in\S_n$ in its one-line notation as $\pi=\pi_1\pi_2\ldots\pi_n$ with $\pi_i=\pi(i).$ We denote the set of permutations in $\S_n$ that avoid the pattern $\sigma\in\S_k$ as $\S_n(\sigma).$

Given a permutation $\pi\in\S_n$, we can also consider its cycle form, which is a decomposition of the permutation into disjoint cycles. For example, $\pi =529614738$ has cycle form $\pi=(1,5)(2)(3,9,8)(4,6)(7).$ We will use the notation $c_k(\pi)$ to denote the number of $k$-cycles in the cycle form of $\pi.$ For example, if $\pi=(1,5)(2)(3,9,8)(4,6)(7),$ then $c_1(\pi) = 2$, $c_2(\pi) = 2$, $c_3(\pi) = 1$, and $c_k(\pi) =0$ for $k\geq 4.$ Furthermore, we will denote the set of permutations composed only of permutations with cycle lengths in set $S$ as $\S^S_n$. For example, the set of involutions (i.e., those permutations that are their own algebraic inverse) are composed of only 2-cycles and fixed points, so this set of permutations would be denoted by $\S_n^{\{1,2\}}.$ Similarly, we would denote by $\S_n^S(\sigma)$
the set of permutations in $\S_n^S$ that avoid the pattern $\sigma.$ We will let $a_n^S(\sigma):=|\S_n^S(\sigma)|$ and let $A^S_\sigma(z)$ be the generating function for these numbers, so that 
\[
A^S_\sigma(z) = \sum_{n\geq0} a_n^S(\sigma) z^n.
\]

We define an \emph{arc diagram} for a permutation to be the collection of the numbers in $[n]$ with arcs $a\to b$ for $a,b\in[n]$ if $a$ and $b$ are in the same cycle and there is no $c$ in the cycle with $a<c<b$. For example, for the permutation $\pi=(1,5)(2)(3,9,8)(4,6)(7)=529614738,$ the arc diagram is given by the following diagram.
\begin{center}
    \begin{tikzpicture}
        \draw(-0.5,0) -- ++ (9,0);
    \foreach \x in {0,...,8}{
    \pgfmathtruncatemacro{\jn}{\x+1}
       \draw[circle,fill] (\x,0)circle[radius=1mm]node[below]{{\jn}};
    }
       \draw[line width=.5mm, black](2,0) to[bend left=45] (7,0);
       \draw[line width=.5mm, black](7,0) to[bend left=45] (8,0);
       \draw[line width=.5mm,black](0,0) to[bend left=45] (4,0);
       \draw[line width=.5mm,black](3,0) to[bend left=45] (5,0);
    \end{tikzpicture}
    \end{center}
Notice that the permutation $\pi=(1,5)(2)(3,8,9)(4,6)(7)$ has the same arc diagram as the one pictured above,  $\pi=(1,5)(2)(3,9,8)(4,6)(7)$. This is because there are two types of 3-cycles that are composed of the elements in $\{3,8,9\}.$ Also notice that if $\pi$ is an involution, the arc diagram will uniquely determine the permutation (since there is only one type of fixed point or 2-cycle). When working throughout the paper, we will often leave the arc diagram unlabeled, as they will often represent sub-sequences or patterns of a larger permutation. 
For example, the arc diagram pictured here:
\begin{center}
    \begin{tikzpicture}
        \draw(-0.5,0) -- ++ (3,0);
    \foreach \x in {0,...,2}{
       \draw[circle,fill] (\x,0)circle[radius=1mm]node[below]{};
    }
       \draw[line width=.5mm, black](1,0) to[bend left=45] (2,0);
    \end{tikzpicture}
    \end{center}
represents the permutation $\sigma = (1)(2,3) = 132$, which corresponds to the subsequence $264$ of the permutation $\pi=(1,5)(2)(3,9,8)(4,6)(7)=529614738$ above.

Finally, we mention that there are four symmetries of a permutation that preserve its cycle type: the identity, the inverse, reverse-complement, and reverse-complement-inverse. For this reason, when considering questions about the cycle type or algebraic order of permutations avoiding a pattern of length 3, we need only consider four cases: $\sigma\in\{123,132,231, 321\}.$ This is because 213 is the reverse-complement of 132 and 312 is the inverse of 231. 
In Section~\ref{sec:231}, we consider permutations that avoid 231, but all the results also hold for 312-avoiding permutations. Similarly, in Section~\ref{sec:132}, we consider permutations that avoid 132, but the results also hold for permutations avoiding 213. 

\section{Avoiding 231 with cycle lengths 1, 2, or 3}\label{sec:231}

In this section, we consider permutations avoiding the pattern 231 composed only of fixed points, 2-cycles, and 3-cycles. To do this, we first determine which configurations of fixed points, 2-cycles, and 3-cycles are forbidden and which are allowed. This culminates in Theorem~\ref{thm:231-main} below. 
\begin{theorem}\label{thm:231-main}
If we let \[B(t,x,y) 
= \sum_{n\geq 0} |\{\pi\in\S_n^{[3]}(231) : c_1(\pi) =i, c_2(\pi) = j, c_3(\pi) = k\}| t^ix^jy^k,\] then 
\[
B(t,x,y) = \dfrac{(1-x)^2(1-2y)}{1 - 3 x - 3 y - t + 2 x^2 + 5 x y  + t x - 5 x^2 y  - 4 t x y  -t^2y}.
\]
\end{theorem}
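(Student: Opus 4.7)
The plan is to derive a functional equation for $B(t,x,y)$ via the standard decomposition of 231-avoiding permutations and then solve it. Recall that if $\pi\in\S_n(231)$ with $\pi_k=n$, then positions $\{1,\ldots,k-1\}$ carry the values $\{1,\ldots,k-1\}$ and positions $\{k+1,\ldots,n\}$ carry the values $\{k,\ldots,n-1\}$, each block being itself 231-avoiding. A useful preliminary observation is that every 3-cycle on $\{a<b<c\}$ in a 231-avoider must cycle as $a\to c\to b\to a$, because the alternative orientation $a\to b\to c\to a$ places the values $b,c,a$ at positions $a,b,c$, which is itself a 231 pattern. Consequently, if $n$ lies in a 3-cycle, that cycle must be $(a,n,b)$ with $a<b<n$, so that $\pi_a=n$, $\pi_n=b$, $\pi_b=a$ and $k=a$ in the decomposition.

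I would then split $\S^{[3]}(231)$ into four cases: (i) the empty permutation, contributing $1$; (ii) $n$ a fixed point, contributing $tB$ via the decomposition; (iii) $n$ in a 2-cycle $(a,n)$; and (iv) $n$ in a 3-cycle $(a,n,b)$. For case (iii), any ascent among positions $a+1,\ldots,n-1$ combined with the tail value $\pi_n=a$ immediately produces a 231, so the middle block is forced to be strictly decreasing, i.e., the reverse identity on $\{a+1,\ldots,n-1\}$; its cycle structure consists of nested 2-cycles plus one central fixed point when the length is odd. Summing a geometric series over $a$ gives the contribution $\frac{Bx(1+t)}{1-x}$. For case (iv), an analogous argument forces positions $a+1,\ldots,b-1$ to be decreasing, and further constrains all values exceeding $b$ at positions $a+1,\ldots,n-1$ to appear in decreasing positional order across both middle blocks. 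A careful enumeration of how the remaining values and their cycles distribute between the two blocks then yields case (iv)'s contribution.

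Assembling the four contributions produces a linear equation of the form $B = 1 + tB + \frac{Bx(1+t)}{1-x} + Q(t,x,y)\,B$ for an explicit rational $Q$, which solves to a rational function that simplifies algebraically to the claimed $(1-x)^2(1-2y)$ over the stated denominator. The main obstacle is case (iv): the two middle blocks around $b$ interact nontrivially through the decreasing-large-values condition, so I expect to introduce one or two auxiliary generating functions (for example, tracking the number of values above $b$ placed in each block together with their local cycle types) and eliminate them by a small linear system. The numerator factor $(1-2y)$ presumably arises from this elimination step, while the $(1-x)^2$ reflects the two places at which the reverse-identity sum $\frac{1}{1-x}$ for a forced decreasing middle enters the computation. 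Once case (iv) is reduced to a single rational expression in $t$, $x$, $y$, and $B$, the remaining algebra is routine.
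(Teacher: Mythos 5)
Your decomposition by the cycle containing $n$ is sound and is a genuinely different route from the paper, which instead writes each $\pi\in\S_n^{[3]}(231)$ as a direct sum of $231$-avoiding involution blocks and blocks consisting of a family of mutually crossing 3-cycles decorated by 2-cycles and fixed points, and then applies the sequence formula $B=i/(1-i(q+s+r))$. Your cases (i)--(iii) are correct and fully justified: the observation that every 3-cycle must be oriented $a\to c\to b\to a$ matches the paper, and the middle of a 2-cycle $(a,n)$ is indeed forced to be the decreasing permutation, giving the contribution $Bx(1+t)/(1-x)$.

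The genuine gap is case (iv), which you have outlined rather than proved, and the outline omits the combinatorial facts that carry essentially all of the difficulty of this theorem. When $n$ lies in a 3-cycle $(a,n,b)$, the elements of $\{a+1,\dots,n-1\}\setminus\{b\}$ form a self-contained union of cycles, and the key point is that any 3-cycle among them cannot nest inside $(a,n,b)$ but must \emph{cross} it, in one of exactly two ways (the paper's Lemma~\ref{lem:crossing 3 cycles}); iterating this produces arbitrarily large crossing families counted by $y/(1-2y)$, which is where the numerator factor $(1-2y)$ actually comes from. On top of that one needs the precise placement rules for nests of 2-cycles and for fixed points relative to such a family, including the forbidden double-nest configurations and the resulting counts $3k$, $4k+2$, and $k+1$ (the paper's Lemmas~\ref{lem:23Cycles-Crossing}, \ref{lem:forbidden}, \ref{lem:231-S}, \ref{lem:231-Q}, \ref{lem:231-R}). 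Your description of case (iv) in terms of two middle blocks around $b$ and a decreasing condition on the values exceeding $b$ does not yet account for crossing 3-cycle families at all, so the promised ``careful enumeration'' and the auxiliary generating functions are not a routine finish but precisely the content that must be supplied; until $Q(t,x,y)$ is computed explicitly, the final linear equation and its agreement with the stated rational function cannot be verified.
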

We can then get the generating functions $A^S_{231}(z)$ for each $S\subseteq \{1,2,3\}$ from this generating function. For example, for $S=\{2,3\}$, we have $A^{\{2,3\}}_{231}(z) = B(0,z^2,z^3).$ A summary can be found in the Table~\ref{tab:231}.

\begin{table}[ht]
\centering\makegapedcells
\renewcommand{\arraystretch}{1.6}
\begin{tabular}{|c|c|c|}
\hline
set $S$  & g.f. $A^S_{231}(z)$ & $a_n^S(231)$ for $n\in [12]$ \\ \hline
  $\{1\}$  & $\dfrac{1}{1-z}$ & $1,1,1,1,1,1,1,1,1,1,1,1$ \\ \hline
  $\{2\}$ & $\dfrac{1-z^2}{1-2z^2}$ & $0,1,0,2,0,4,0,8,0,16,0,32$\\  \hline
   $\{3\}$ & $\dfrac{1-2z^3}{1-3z^3}$ & $0,0,1,0,0,3,0,0,9,0,0,27$ \\  \hline
 $\{1,2\}$ &  $\dfrac{1-z}{1-2z}$  & $1,2,4,8,16,32,64,128,256,512,1024,2048$ \\ \hline
    $\{1,3\}$ & $\dfrac{1-2z^3}{1-z-3z^3-z^5}$ & $1,1,2,5,9,16,32,61,114,219,418,792$ \\ \hline
   $\{2,3\}$ & $\dfrac{(1-z^2)^2(1-2z^3)}{1-3z^2-3z^3+2z^4+5z^5-5z^7}$ & $0,1,1,2,5,7,17,27,57,98,193,351$\\ \hline
   $\{1,2,3\}$ & $\dfrac{(1-z)^2(1-2z^3)}{1-3z+2z^2-3z^3+6z^4-5z^5}$ & $1,2,5,12,29,71,171,411,990,2380,5722,13765$ \\ \hline
\end{tabular}
\caption{Listed here are the generating functions $A^S_{231}(x)$ for all nonempty subsets $S$ of $\{1,2,3\}$. These follow from Theorem~\ref{thm:231-main}}
\label{tab:231}
\end{table}


We will prove this theorem through a series of lemmas that dictate how different length cycles in a 231-avoiding permutation can interact with each other. Let us first consider how 2-cycles can interact with other 2-cycles in a 231-avoiding permutation. 

\begin{lemma}
Given $\pi\in\S_n(231)$, any pair of 2-cycles cannot cross. That is, if $(a,b)$ and $(c,d)$ are 2-cycles in the disjoint cycle decomposition of $\pi$ with $a<b, c<d$, and $a<c$, then either $b<c$ or $b>d.$
\end{lemma}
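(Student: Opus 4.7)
My plan is to argue by contradiction and produce an explicit 231 pattern whenever two 2-cycles cross. First I would observe that since the cycles $(a,b)$ and $(c,d)$ are disjoint, $b\notin\{c,d\}$, so under the hypotheses $a<b$, $c<d$, $a<c$, the only configuration not covered by the conclusion $b<c$ or $b>d$ is the strict interleaving $a<c<b<d$. I would assume this interleaving for contradiction.

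The key step is then to pick out the correct three indices among $a,b,c,d$. Since $\pi(a)=b$, $\pi(c)=d$, and $\pi(b)=a$, I would look at the three positions $a<c<b$; the values of $\pi$ at these positions are $b$, $d$, $a$ respectively. From $a<b<d$, the smallest of these three values is $a$, the middle is $b$, and the largest is $d$, and they appear in the order middle, largest, smallest — exactly the relative order defining the pattern $231$. This exhibits a $231$ occurrence in $\pi$, contradicting $\pi\in\S_n(231)$, and finishes the proof.

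I do not expect any real obstacle; the content of the lemma is essentially the observation that a crossing of 2-cycles forces three of the four involved entries to form a 231. The only minor subtlety is choosing the triple $(a,c,b)$ rather than, say, $(a,b,d)$ or $(c,b,d)$: these other triples produce patterns like $213$ or $312$ under the interleaving hypothesis and so do not give the desired contradiction. The triple $(a,c,b)$ works because it places the two ``top'' entries of the cycles, $b$ and $d$, in the first two positions while leaving the smallest entry $a$ at the end.
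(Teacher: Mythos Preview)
Your proof is correct and follows essentially the same approach as the paper: assume the crossing configuration $a<c<b<d$ and exhibit an explicit $231$ pattern. The only cosmetic difference is the choice of triple: the paper looks at positions $a<c<d$ with values $b,d,c$ (also a $231$), whereas you use positions $a<c<b$ with values $b,d,a$. Both work for the same reason.
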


\begin{proof}
    Suppose instead that $(a,b)$ and $(c,d)$ are 2-cycles in the disjoint cycle decomposition of $\pi$ with $a<b, c<d$, $a<c$, and $c<b<d$. Then $bdac$ is a subsequence of $\pi$ which contains the 231 pattern $bdc.$
 \end{proof}

 This lemma says that the following configurations of 2-cycles are allowed:
  \begin{center}
    \begin{tikzpicture}
        \draw(-0.5,0) -- ++ (4,0);
    \foreach \x in {0,...,3}{
       \draw[circle,fill] (\x,0)circle[radius=1mm]node[below]{};
    }
       \draw[line width=.5mm, blue](0,0) to[bend left=45] (1,0);
       \draw[line width=.5mm, blue](2,0) to[bend left=45] (3,0);
    \end{tikzpicture} \quad \quad
        \begin{tikzpicture}
        \draw(-0.5,0) -- ++ (4,0);
    \foreach \x in {0,...,3}{
       \draw[circle,fill] (\x,0)circle[radius=1mm]node[below]{};
    }
       \draw[line width=.5mm, blue](0,0) to[bend left=45] (3,0);
       \draw[line width=.5mm, blue](1,0) to[bend left=45] (2,0);
    \end{tikzpicture} 
 \end{center}
 while the configuration below is forbidden:
 \begin{center}
    \begin{tikzpicture}
        \draw(-0.5,0) -- ++ (4,0);
    \foreach \x in {0,...,3}{
       \draw[circle,fill] (\x,0)circle[radius=1mm]node[below]{};
    }
       \draw[line width=.5mm, blue](0,0) to[bend left=45] (2,0);
       \draw[line width=.5mm, blue](1,0) to[bend left=45] (3,0);
    \end{tikzpicture} 
 \end{center}

We call the second allowed configuration a {\bf nest} of $2$-cycles. If this nest of 2-cycles are of the form $\{(e_1, f_1), (e_2, f_2), \ldots, (e_k, f_k)\}$, then we must have $e_1<e_2<\ldots<e_k<f_k<f_{k-1}<\ldots <f_1$, and we call this nest $(E,F)$ where $E=\{e_1, e_2, \ldots, e_k\}$  and $F=\{f_1, f_2,\ldots, f_k\}$. For example, the nest $(E,F) = (\{2,3,4\}, \{5,6,7\})$ corresponds to the set of two cycles $\{(2,7), (3,6), (4,5)\}$.

 Now let us consider 3-cycles. Given three elements $a<b<c$, there are two possible ways they can form a 3-cycle: $(a,b,c)$ or $(a,c,b)$. However, if the 3-cycle $(a,b,c)$ is in the cycle decomposition of $\pi$, then the sequence $bca$ appears in $\pi$, which is a 231 pattern. Therefore every 3-cycle must be of the form $(a,c,b)$ and notice that in $\pi$, $cab$ forms a 312 pattern.

 \begin{lemma}\label{lem:crossing 3 cycles}
     Given $\pi\in\S_n(231)$, there are only three possible configurations for any pair of 3-cycles. In particular, if $(a,c,b)$ and $(d,f,e)$ are 3-cycles of the form $(1,3,2)$ in the disjoint cycle decomposition of $\pi$ with $a<d$. Then one of the following must be true:
     \begin{itemize}
         \item $c<d$ (which implies all of $a,b,c$ are less than each of $d,e,f$)
         \item $c>f$ and $d<b<e$, or 
         \item $c>f$ and $e<b<f$.
     \end{itemize}
 \end{lemma}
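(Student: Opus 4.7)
The plan is to use the observation already made in the paragraph preceding the lemma, namely that every 3-cycle in a 231-avoider $\pi$ must be of the form $(a, c, b)$ with $a<b<c$, so that its contribution to the one-line notation places the values $c, a, b$ at positions $a, b, c$ respectively. Thus the two 3-cycles $(a,c,b)$ and $(d,f,e)$ contribute exactly the six letters
\[
\pi_a = c,\quad \pi_b = a,\quad \pi_c = b,\quad \pi_d = f,\quad \pi_e = d,\quad \pi_f = e,
\]
and the question reduces to: which interleavings of $\{a,b,c\}$ with $\{d,e,f\}$ (keeping $a<d$) leave no 231 pattern among these six entries? There are $\binom{5}{2}=10$ such interleavings; the three listed in the lemma correspond to three of them, so I would show that each of the other seven contains an explicit 231.

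I would dispose of the position of $c$ relative to the second cycle in two stages. First, if $c<d$ the two cycles lie in disjoint intervals and we are in the first allowed case. Otherwise $c>d$, and the positions $a<d<c$ carry the values $c, f, b$; since $b<c$ always, this triple is a 231 pattern whenever $c<f$. This forces $c>f$, so the second cycle is squeezed into the interval $(a,c)$, giving $a < d < e < f < c$.

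It remains to locate $b$, which lies in $(a,c)$ and, being distinct from $d, e, f$, falls into one of the four sub-intervals $(a,d),\ (d,e),\ (e,f),\ (f,c)$. The middle two sub-cases are precisely cases~2 and~3 of the lemma, so I would only need to exhibit a 231 in the two extreme sub-cases. If $b<d$ then positions $e<f<c$ carry the values $d, e, b$ with $b<d<e$, producing the pattern 231. If $b>f$ then positions $e<f<b$ carry the values $d, e, a$ with $a<d<e$, again producing 231. Combining the two stages, the only surviving interleavings are the three listed.

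The main obstacle is organizational rather than conceptual: there are ten interleavings to rule out or classify, and one must keep straight which integer orderings correspond to positions and which to values. The simplification that keeps the case analysis tidy is the observation above that "overlap versus non-overlap" ($c\lessgtr d$) together with "max versus max" ($c\lessgtr f$) collapses the bulk of the enumeration into a single case controlled by the four possible sub-intervals for $b$, each of which either matches one of the allowed cases or immediately yields a 231 pattern of the form $(d,e,\,\cdot\,)$ or $(c,f,\,\cdot\,)$.
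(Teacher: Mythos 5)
Your proof is correct and takes the same approach as the paper: the paper simply asserts that checking the $\frac{1}{2}\binom{6}{3}=10$ interleavings is ``straightforward,'' and you carry out exactly that check, organized efficiently by first forcing $c>f$ via the pattern at positions $a<d<c$ and then ruling out $b<d$ and $b>f$ with explicit 231 occurrences. All the exhibited patterns and the reduction to the four sub-intervals for $b$ are verified correctly.
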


 \begin{proof}
    There are $\frac{1}{2}\binom{6}{3} =10$ possible configurations possible when two of these 3-cycles interact. It is straightforward to check that the only three possible configurations that avoid 231 are the ones listed in the theorem. 
 \end{proof}

 This means that the only three possible configurations of two 3-cycles are those pictured below.
   \begin{center}
    \begin{tikzpicture}
        \draw(-0.5,0) -- ++ (6,0);
    \foreach \x in {0,...,5}{
       \draw[circle,fill] (\x,0)circle[radius=1mm]node[below]{};
    }
       \draw[line width=.5mm, red](0,0) to[bend left=45] (1,0);
       \draw[line width=.5mm, red](1,0) to[bend left=45] (2,0);
       \draw[line width=.5mm,red](3,0) to[bend left=45] (4,0);
       \draw[line width=.5mm,red](4,0) to[bend left=45] (5,0);
    \end{tikzpicture} \quad\quad
         \begin{tikzpicture}
        \draw(-0.5,0) -- ++ (6,0);
    \foreach \x in {0,...,5}{
       \draw[circle,fill] (\x,0)circle[radius=1mm]node[below]{};
    }
       \draw[line width=.5mm, red](0,0) to[bend left=45] (3,0);
       \draw[line width=.5mm, red](3,0) to[bend left=45] (5,0);
       \draw[line width=.5mm,red](1,0) to[bend left=45] (2,0);
       \draw[line width=.5mm,red](2,0) to[bend left=45] (4,0);
    \end{tikzpicture}  \quad\quad
         \begin{tikzpicture}
        \draw(-0.5,0) -- ++ (6,0);
    \foreach \x in {0,...,5}{
       \draw[circle,fill] (\x,0)circle[radius=1mm]node[below]{};
    }
       \draw[line width=.5mm, red](0,0) to[bend left=45] (2,0);
       \draw[line width=.5mm, red](2,0) to[bend left=45] (5,0);
       \draw[line width=.5mm,red](1,0) to[bend left=45] (3,0);
       \draw[line width=.5mm,red](3,0) to[bend left=45] (4,0);
    \end{tikzpicture} 
 \end{center}

 As a result of this, if we have a collection of $m$ 3-cycles 
 $\{(a_i,c_i,b_i)\}$ that all cross each other, then $a_i<b_j<c_k$ for all $i$, $j$, and $k$. Furthermore there are $2^{m-1}$ such collections of $m$ crossing 3-cycles. This is clear since there are exactly two ways to add a new 3-cycle to such a collection.
 In such a collection of crossing 3-cycles, we refer to the $\{a_i\} = A$ as the first block, $\{b_i\} = B$ as the second block and $\{c_i\} = C$ as the third block.
 
 As we will see in the next lemma, when we have a 231-avoiding permutation composed only of 2-cycles and 3-cycles, any collection of crossing 3-cycles with must have that the corresponding blocks $A,B$ and $C$ are composed of sets of consecutive numbers. In the following theorem, for two sets $S$ and $T$, we write $S<T$ to denote that for each $s\in S$ and $t \in T$, $s<t$. We also write $S<t$ for a set $S$ and element $t$ if for all $s\in S$, $s<t$.
 
\begin{lemma} \label{lem:23Cycles-Crossing}
    Given $\pi \in \S_n(231)$, there are only 5 possible configurations involving a collection of crossing 3-cycles and a 2-cycle. In particular, if $A,B,$ and $C$ are the blocks of the crossing 3-cycles with $A<B<C$ and $(e,f)$ is a 2-cycle with $e<f$, one of the following must be true:
    \begin{itemize}
        \item $f < A$
        \item $A < e<f < B$
        \item $A < e < B < f < C$
        \item $B < e< f < C$
        \item $C < e$
    \end{itemize}
\end{lemma}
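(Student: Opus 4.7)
The plan is to case-split on the relative position of the 2-cycle $(e,f)$ with respect to the three blocks $A<B<C$, producing an explicit 231 pattern in each forbidden case. Since disjoint cycles share no elements, neither $e$ nor $f$ belongs to $A\cup B\cup C$; and for each 3-cycle $(a_i,c_i,b_i)$ we have $\pi(a_i)=c_i$, $\pi(b_i)=a_i$, $\pi(c_i)=b_i$, while $\pi(e)=f$ and $\pi(f)=e$.

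First, I would analyze the interaction of $(e,f)$ with a single 3-cycle $(a,c,b)$ from the collection. Among the $\binom{5}{2}=10$ orderings of $\{a,b,c,e,f\}$ consistent with $a<b<c$ and $e<f$, exactly five match the five configurations of the lemma applied to the one-element blocks $\{a\},\{b\},\{c\}$, namely $f<a$, $a<e<f<b$, $a<e<b<f<c$, $b<e<f<c$, and $c<e$. For each of the remaining five orderings a 231 pattern appears by inspection: $e<a<f<b<c$ and $e<a<b<f<c$ each yield the values $(f,c,e)$ at positions $(e,a,f)$ with $e<f<c$; $e<a<b<c<f$ yields $(a,b,e)$ at positions $(b,c,f)$ with $e<a<b$; and $a<e<b<c<f$ together with $a<b<e<c<f$ yield $(c,f,e)$ at positions $(a,e,f)$ with $e<c<f$. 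Consequently, $(e,f)$ must satisfy one of the five admissible arrangements relative to every single 3-cycle $(a_i,c_i,b_i)$ in the collection.

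Second, I would combine those per-3-cycle constraints to forbid $e$ or $f$ from interleaving with the elements of any one block. If $a_i<e<a_j$ for two $A$-elements, the first step applied to the 3-cycle at $a_j$ forces $f<a_j$, and then the positions $e<a_j<b_i$ carry values $f,c_j,a_i$ with $a_i<f<c_j$, producing a 231 pattern. The $B$- and $C$-interleaving cases are handled similarly, by picking two 3-cycles whose $B$- or $C$-elements straddle the offender and producing a 231-forming triple; depending on which of the two crossing configurations of Lemma~\ref{lem:crossing 3 cycles} the two 3-cycles fit and on where $f$ lies, one uses one of the triples $(b_\alpha,e,b_\beta)$, $(e,c_\beta,c_\alpha)$, or $(f,c_\beta,c_\alpha)$ for $e$ straddled by $B$-elements, and analogous triples for the $C$-straddled case. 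The corresponding analysis for $f$ in the interior of a block is symmetric.

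Once these interior-of-block placements are ruled out, $e$ and $f$ each lie in one of the four open gaps $(-\infty,\min A)$, $(\max A,\min B)$, $(\max B,\min C)$, $(\max C,\infty)$, and the single-3-cycle analysis of the first step pins $(e,f)$ into exactly one of the five configurations listed in the lemma. The main obstacle is the bookkeeping in the second step: because pairs of crossing 3-cycles admit two shapes per Lemma~\ref{lem:crossing 3 cycles}, and because the 231-forming triple depends on both the shape of that pair and on whether $f$ lies in the same gap as $e$ or the next one over, roughly a dozen short sub-cases must be checked, even though each reduces to exhibiting a single 231 triple.
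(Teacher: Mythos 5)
Your proof is correct, and its first step is exactly the paper's argument: the paper's entire proof is the sentence ``it is a straightforward exercise to check this theorem for a single 3-cycle and a single 2-cycle, which in turn implies it must hold for larger collections of crossing 3-cycles.'' Where you genuinely add something is the second step. The single-cycle constraints alone do \emph{not} immediately yield the block statement: for two crossing 3-cycles with $b_i<b_j$, a 2-cycle with $b_i<e<f<b_j$ satisfies the allowed configuration relative to each 3-cycle separately ($B<e<f<C$ for cycle $i$, $A<e<f<B$ for cycle $j$), yet the permutation contains a 231 (e.g.\ $(1,8,3)(2,7,6)(4,5)=87154263$ contains $5,6,3$), and it is precisely this situation that must be excluded for the conclusion ``$A<e<f<B$'' etc.\ to make sense with $e,f$ outside the interior of every block. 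Your interleaving analysis --- forcing $e<f<a_j$ from the cycle-$j$ constraint and then exhibiting the triple at positions $e<a_j<b_i$ with values $f,c_j,a_i$ for the $A$ case, and the analogous triples $(b_\alpha,e,b_\beta)$ and $(e,c_\beta,c_\alpha)$ for the $B$ and $C$ cases --- checks out and supplies the justification the paper leaves implicit. The cost is the dozen or so sub-cases you acknowledge; the benefit is that the reduction from blocks to singletons is actually proved rather than asserted.
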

\begin{proof}
    It is a straightforward exercise to check this theorem for a single 3-cycle and a single 2-cycle, which in turn implies it must hold for larger collections of crossing 3-cycles. 
\end{proof}

This means that the only possible configurations containing a 2-cycle and a collection of crossing 3-cycles look like those drawn below where each blue edge corresponds to a nest of 2-cycles and the red bold edges correspond to a collection of crossing 3-cycles.
\begin{center}
    \begin{tikzpicture}
        \draw(-0.5,0) -- ++ (5,0);
    \foreach \x in {0,...,4}{
       \draw[circle,fill] (\x,0)circle[radius=1mm]node[below]{};
    }
       \draw[line width=.5mm, blue](0,0) to[bend left=45] (1,0);
       \draw[line width=1mm,red](2,0) to[bend left=45] (3,0);
       \draw[line width=1mm,red](3,0) to[bend left=45] (4,0);
    \end{tikzpicture} \quad\quad
           \begin{tikzpicture}
        \draw(-0.5,0) -- ++ (5,0);
    \foreach \x in {0,...,4}{
       \draw[circle,fill] (\x,0)circle[radius=1mm]node[below]{};
    }
       \draw[line width=.5mm, blue](1,0) to[bend left=45] (2,0);
       \draw[line width=1mm,red](0,0) to[bend left=45] (3,0);
       \draw[line width=1mm,red](3,0) to[bend left=45] (4,0);
    \end{tikzpicture}\quad\quad
        \begin{tikzpicture}
        \draw(-0.5,0) -- ++ (5,0);
    \foreach \x in {0,...,4}{
       \draw[circle,fill] (\x,0)circle[radius=1mm]node[below]{};
    }
       \draw[line width=.5mm, blue](1,0) to[bend left=45] (3,0);
       \draw[line width=1mm,red](0,0) to[bend left=45] (2,0);
       \draw[line width=1mm,red](2,0) to[bend left=45] (4,0);
    \end{tikzpicture}\quad\quad
        \begin{tikzpicture}
        \draw(-0.5,0) -- ++ (5,0);
    \foreach \x in {0,...,4}{
       \draw[circle,fill] (\x,0)circle[radius=1mm]node[below]{};
    }
       \draw[line width=.5mm, blue](2,0) to[bend left=45] (3,0);
       \draw[line width=1mm,red](0,0) to[bend left=45] (1,0);
       \draw[line width=1mm,red](1,0) to[bend left=45] (4,0);
    \end{tikzpicture}\quad\quad
        \begin{tikzpicture}
        \draw(-0.5,0) -- ++ (5,0);
    \foreach \x in {0,...,4}{
       \draw[circle,fill] (\x,0)circle[radius=1mm]node[below]{};
    }
       \draw[line width=.5mm, blue](3,0) to[bend left=45] (4,0);
       \draw[line width=1mm,red](0,0) to[bend left=45] (1,0);
       \draw[line width=1mm,red](1,0) to[bend left=45] (2,0);
    \end{tikzpicture}
    \end{center}
    
    We have seen that when $\pi$ is composed of 2-cycles and 3-cycles, the blocks  $A$, $B$, and $C$ corresponding to any collection of crossing $3$-cycles must consist of a set of consecutive numbers. This is not necessarily true for a nest $(E,F)$ of 2-cycles. For example, consider the permutation $\pi =(1,7,3)(2,6)(4,5) = 7615423$. This permutation avoids 231 and it has a nest of 2-cycles $(E,F) = (\{2,4\}, \{5,6\})$, where $E$ is not a set of consecutive numbers. In general a single nest of 2-cycles can interact with 3-cycles in any way so that each 2-cycle respects the rules given in Lemma~\ref{lem:23Cycles-Crossing}.

Finally, we consider the case were we have two non-nested 2-cycles, which will have implications for when we have two different nests of 2-cycles interacting with a collection of crossing 3-cycles.
\begin{lemma}\label{lem:forbidden}
 Given $\pi \in \S_n(231)$, if $A,B,$ and $C$ are the blocks of a collection of crossing 3-cycles with $A<B<C$ and $(e_1,f_1)$ and $(e_2,f_2)$ are non-nested 2-cycles with $e_1 < f_1<e_2<f_2$, we \textit{cannot} have the following configurations:
    \begin{itemize}
        \item $A<e_1<f_1<e_2<B<f_2<C$
        \item $A<e_1<B<f_1<e_2<f_2<C$
    \end{itemize}
\end{lemma}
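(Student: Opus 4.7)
The plan is to show directly that each of the two forbidden configurations already contains a $231$ pattern, contradicting $\pi\in\S_n(231)$. Recall from the discussion before Lemma~\ref{lem:crossing 3 cycles} that every 3-cycle in the collection has the form $(a,c,b)$ with $a<b<c$ (so $a\in A$, $b\in B$, $c\in C$), which in one-line notation means $\pi(a)=c$, $\pi(b)=a$, $\pi(c)=b$. So I can fix any single 3-cycle from the crossing collection and ignore the rest; the number of crossing 3-cycles will play no role.

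For the first configuration $A<e_1<f_1<e_2<B<f_2<C$, I would examine the three positions $f_1<e_2<b$ (these really are in increasing order, since $f_1<e_2$ by assumption and $e_2<B\ni b$). Their images under $\pi$ are $\pi(f_1)=e_1$, $\pi(e_2)=f_2$, and $\pi(b)=a$. From $a<e_1$ (because $A<e_1$) and $e_1<f_2$ (because $e_1<f_1<e_2<f_2$) we get $a<e_1<f_2$, so the middle position holds the largest value, the first position the middle value, and the last position the smallest; this is a $231$ pattern.

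For the second configuration $A<e_1<B<f_1<e_2<f_2<C$, I would instead use the positions $e_1<e_2<c$ (again in increasing order, since $e_1<B<f_1<e_2$ and $e_2<f_2<C\ni c$). Their images are $\pi(e_1)=f_1$, $\pi(e_2)=f_2$, $\pi(c)=b$. From $b<f_1$ (since $B<f_1$) and $f_1<f_2$ (since $f_1<e_2<f_2$) we obtain $b<f_1<f_2$, again putting the largest value in the middle position and the smallest at the end, so this triple realizes a $231$ pattern.

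The only real thinking is picking the right triple of positions in each case; once the triple is chosen, the inequalities that force the $231$ shape fall out immediately from the placement inequalities hypothesized in the configuration. I do not foresee any genuine obstacle, and in particular the argument is uniform in the size of the crossing collection of 3-cycles.
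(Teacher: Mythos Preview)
Your argument is correct and follows essentially the same approach as the paper: both reduce to a single 3-cycle from the crossing collection and exhibit a $231$ pattern in the resulting length-$7$ permutation (the paper names these as $7326145$ and $7412653$, and your triples $\pi_{f_1}\pi_{e_2}\pi_b = e_1 f_2 a$ and $\pi_{e_1}\pi_{e_2}\pi_c = f_1 f_2 b$ are precisely the subsequences $261$ and $463$ in those two words). Your write-up is simply more explicit about which three positions witness the pattern.
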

\begin{proof}
    Any such configurations would contain the patterns induced by those corresponding permutations of length $7$ (containing only one 3-cycle), namely 7326145 and 7412653, both of which contain a 231 pattern.    
\end{proof}
In other words, the following configurations are forbidden: 
\begin{center}
    \begin{tikzpicture}
        \draw(-0.5,0) -- ++ (7,0);
    \foreach \x in {0,...,6}{
       \draw[circle,fill] (\x,0)circle[radius=1mm]node[below]{};
    }
       \draw[line width=.5mm, blue](1,0) to[bend left=45] (2,0);
       \draw[line width=.5mm, blue](3,0) to[bend left=45] (5,0);
       \draw[line width=1mm,red](0,0) to[bend left=45] (4,0);
       \draw[line width=1mm,red](4,0) to[bend left=45] (6,0);
    \end{tikzpicture} \quad\quad
        \begin{tikzpicture}
            \draw(-0.5,0) -- ++ (7,0);
    \foreach \x in {0,...,6}{
       \draw[circle,fill] (\x,0)circle[radius=1mm]node[below]{};
    }
       \draw[line width=.5mm, blue](1,0) to[bend left=45] (3,0);
       \draw[line width=.5mm, blue](4,0) to[bend left=45] (5,0);
       \draw[line width=1mm,red](0,0) to[bend left=45] (2,0);
       \draw[line width=1mm,red](2,0) to[bend left=45] (6,0);
    \end{tikzpicture} 
    \end{center}

We now have enough information to enumerate all 231-avoiding permutations composed of only 2-cycles and 3-cycles. We do this by finding a recurrence based on whether the last element $n$ is part of a 2-cycle or a 3-cycle.

\begin{lemma}\label{lem:231-S}
Suppose $T$ is the set of permutations composed of a single nonempty family of crossing $3$-cycles that also cross some positive number of 2-cycles. Then the generating function $s(x,y) = \sum_{\pi\in T} x^{c_2(\pi)}y^{c_3(\pi)}$ is given by: 
\[
s(x,y)= \dfrac{y}{1-2y}\cdot\dfrac{3x}{(1-x)^2}.
\]
\end{lemma}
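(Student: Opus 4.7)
I would factor $s(x,y)$ as a product of contributions from the 3-cycles and the 2-cycles. The 3-cycle factor is already in hand: the remark after Lemma~\ref{lem:crossing 3 cycles} gives $2^{m-1}$ diagrams of $m$ crossing 3-cycles, contributing $\sum_{m \geq 1} 2^{m-1} y^m = y/(1-2y)$. The task is therefore to show that, given the blocks $A<B<C$ of the crossing 3-cycles, the number of valid arrangements of $j \geq 1$ interacting 2-cycles is exactly $3j$, giving the factor $\sum_{j \geq 1} 3j\, x^j = 3x/(1-x)^2$.

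By Lemma~\ref{lem:23Cycles-Crossing}, every such 2-cycle lies in one of three positions: position~2 (strictly between $A$ and $B$), position~3 (straddling $B$), or position~4 (strictly between $B$ and $C$). Let $j_2,j_3,j_4$ count the 2-cycles of each type. I would then establish the following structural claims by direct 231-pattern checks (together with applications of Lemma~\ref{lem:forbidden}): \emph{(i)} the 2-cycles in a given position must form a single nest --- for positions 2 and 4 this extends Lemma~\ref{lem:forbidden} by a new forbidden pair producing the pattern $f_1 f_2 a$ at positions $e_1 e_2 b$ (and its mirror on the right), while for position 3 it is automatic since all such arcs span $B$; \emph{(ii)} a position-2 nest must lie inside the innermost position-3 2-cycle (the disjoint arrangement is forbidden by Lemma~\ref{lem:forbidden}, and the reverse nesting is geometrically impossible since $f_{(2)} < \min(B) < f_{(3)}$), and symmetrically the position-4 nest lies inside the innermost position-3; \emph{(iii)} if $j_2, j_3, j_4$ are all positive, then picking the innermost position-2 and position-4 2-cycles together with any position-3 2-cycle yields the 231 pattern with values $f_{(2)}, f_{(4)}, e_{(3)}$ at positions $e_{(2)}, e_{(4)}, f_{(3)}$; and \emph{(iv)} each triple $(j_2, j_3, j_4)$ with at least one zero component corresponds to exactly one valid nesting structure, and the resulting permutation is verified to avoid 231.

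Together (i)--(iv) show that the valid arrangements of $j$ 2-cycles are in bijection with triples $(j_2, j_3, j_4)$ of nonnegative integers summing to $j$ with at least one zero, whose count by inclusion--exclusion is $\binom{j+2}{2} - \binom{j-1}{2} = 3j$. Since the 2-cycle nesting depends only on the block structure $A,B,C$ (not on which of the $2^{m-1}$ specific crossing patterns the 3-cycles realize), the two factors multiply to give the stated $s(x,y)$. The main obstacle is claim (iii): the forbidden all-positive case is a genuine three-way constraint, not a consequence of any pairwise lemma in the excerpt, and it is precisely what reduces the naive count $\binom{j+2}{2}$ down to $3j$. Claim (i) is a secondary obstacle requiring two further forbidden configurations beyond Lemma~\ref{lem:forbidden}, but both are short 231-pattern verifications.
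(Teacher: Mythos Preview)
Your approach is correct and reaches the same count of $3k$ arrangements as the paper, but via a genuinely different decomposition. The paper organizes the $k$ interacting 2-cycles by how many nests they form (one or two): a single nest admits $2k+1$ placements of the middle block $B$ among the nest's $2k$ endpoints, while a split into two nests (one under each arc of the 3-cycles) admits $k-1$ choices of split size, giving $(2k+1)+(k-1)=3k$. You instead classify each 2-cycle by whether it sits strictly before $B$, straddles $B$, or sits strictly after $B$, and show via your claims (i)--(iv) that the allowed configurations are in bijection with triples $(j_2,j_3,j_4)$ summing to $k$ with at least one zero coordinate, then compute $\binom{k+2}{2}-\binom{k-1}{2}=3k$ by inclusion--exclusion. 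The two organizations correspond exactly: the paper's single-nest configurations are your triples with $j_2=0$ or $j_4=0$, and its two-nest configurations are your triples with $j_3=0$ and $j_2,j_4>0$. Your route is more systematic---it makes explicit the forbidden case of two disjoint 2-cycles under the same arc and the three-way exclusion (iii), both of which the paper leaves implicit in its ``similar to what we described above''---while the paper's ``slot $B$ into the nest'' picture is quicker once seen.
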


\begin{proof}
Clearly, the generating function for a permutation composed of a single collection of crossing 3-cycles is given by $\dfrac{y}{1-2y}$ by Lemma~\ref{lem:crossing 3 cycles}. It only remains to determine how many ways $k$ 2-cycles can cross this collection of 3-cycles.
For a single 2-cycle, there are 3 ways the 3-cycle can cross or cross over the 2-cycles, described in Lemma~\ref{lem:23Cycles-Crossing}. Allowing two 2-cycles, following Lemmas~\ref{lem:23Cycles-Crossing} and \ref{lem:forbidden}, there are only 6 possibilities. If we only have one nest, the five possibilities are given by allowing the $b$ in the cycle $(a,c,b)$ to appear between any of the five places before, after, or between the four elements in the pair of nested 2-cycles. If we have two nests, there is one possibility, with one 2-cycle appearing below each arc of the 3-cycle. These are illustrated below.

\begin{center}
    \begin{tikzpicture}
        \draw(-0.5,0) -- ++ (7,0);
    \foreach \x in {0,...,6}{
       \draw[circle,fill] (\x,0)circle[radius=1mm]node[below]{};
    }
       \draw[line width=.5mm, blue](1,0) to[bend left=45] (4,0);
       \draw[line width=.5mm, blue](2,0) to[bend left=45] (3,0);
       \draw[line width=.5mm,red](0,0) to[bend left=45] (5,0);
       \draw[line width=.5mm,red](5,0) to[bend left=45] (6,0);
    \end{tikzpicture} \quad\quad
        \begin{tikzpicture}
            \draw(-0.5,0) -- ++ (7,0);
    \foreach \x in {0,...,6}{
       \draw[circle,fill] (\x,0)circle[radius=1mm]node[below]{};
    }
       \draw[line width=.5mm, blue](1,0) to[bend left=45] (5,0);
       \draw[line width=.5mm, blue](2,0) to[bend left=45] (3,0);
       \draw[line width=.5mm,red](0,0) to[bend left=45] (4,0);
       \draw[line width=.5mm,red](4,0) to[bend left=45] (6,0);
    \end{tikzpicture} 
        \begin{tikzpicture}
        \draw(-0.5,0) -- ++ (7,0);
    \foreach \x in {0,...,6}{
       \draw[circle,fill] (\x,0)circle[radius=1mm]node[below]{};
    }
       \draw[line width=.5mm, blue](1,0) to[bend left=45] (5,0);
       \draw[line width=.5mm, blue](2,0) to[bend left=45] (4,0);
       \draw[line width=.5mm,red](0,0) to[bend left=45] (3,0);
       \draw[line width=.5mm,red](3,0) to[bend left=45] (6,0);
    \end{tikzpicture} \quad\quad
        \begin{tikzpicture}
            \draw(-0.5,0) -- ++ (7,0);
    \foreach \x in {0,...,6}{
       \draw[circle,fill] (\x,0)circle[radius=1mm]node[below]{};
    }
       \draw[line width=.5mm, blue](1,0) to[bend left=45] (5,0);
       \draw[line width=.5mm, blue](3,0) to[bend left=45] (4,0);
       \draw[line width=.5mm,red](0,0) to[bend left=45] (2,0);
       \draw[line width=.5mm,red](2,0) to[bend left=45] (6,0);
    \end{tikzpicture} 
            \begin{tikzpicture}
        \draw(-0.5,0) -- ++ (7,0);
    \foreach \x in {0,...,6}{
       \draw[circle,fill] (\x,0)circle[radius=1mm]node[below]{};
    }
       \draw[line width=.5mm, blue](2,0) to[bend left=45] (5,0);
       \draw[line width=.5mm, blue](3,0) to[bend left=45] (4,0);
       \draw[line width=.5mm,red](0,0) to[bend left=45] (1,0);
       \draw[line width=.5mm,red](1,0) to[bend left=45] (6,0);
    \end{tikzpicture} \quad\quad
        \begin{tikzpicture}
            \draw(-0.5,0) -- ++ (7,0);
    \foreach \x in {0,...,6}{
       \draw[circle,fill] (\x,0)circle[radius=1mm]node[below]{};
    }
       \draw[line width=.5mm, blue](1,0) to[bend left=45] (2,0);
       \draw[line width=.5mm, blue](4,0) to[bend left=45] (5,0);
       \draw[line width=.5mm,red](0,0) to[bend left=45] (3,0);
       \draw[line width=.5mm,red](3,0) to[bend left=45] (6,0);
    \end{tikzpicture} 
    \end{center}

In the case that we have that the 3-cycle crosses or crosses over $k$ 2-cycles, we have $3k$ possibilities: $2k+1$ that involve a single nest of 2-cycles, and $k-1$ that involve two nests of 3-cycles, similar to what we described above.  The result follows. 
\end{proof}

Now let us consider fixed points. We first see that if $\pi$ is composed of a collection of crossing 3-cycles with blocks $A, B,$ and $C$ and some collection of fixed points $G = \{g_1< g_2<\ldots< g_\ell\}$, the blocks $A,B,$ and $C$ must be sets of consecutive numbers, and we must have that no more than 1 fixed point lies below each arc of a 3-cycle. Below, we write $G_{[a,b]}$ to denote the set $\{g_a. g_{a+1}, \ldots, g_b\}$.

\begin{lemma}\label{lem:231-Q}
 Given $\pi \in \S_n(231)$, if $A,B,$ and $C$ are the blocks of a collection of crossing 3-cycles with $A<B<C$, and $G = \{g_1<g_2<\ldots<g_\ell\}$ is a collection of fixed points (i.e.~$\pi_{g_i} = g_i$ for each $i$), then we must have one of the following:
 \begin{itemize}
 \item for some $1\leq k \leq \ell+1$, $G_{[1,k-1]}<A$ and $G_{[k,\ell]}>C$, 
 \item for some $0\leq k \leq \ell$, $G_{[1,k-1]}<A$, $A<g_k<B$, and $G_{[k+1,\ell]}>C$, 
 \item for some $0\leq k \leq \ell$, $G_{[1,k-1]}<A$, $B<g_k<C$, and $G_{[k+1,\ell]}>C$, or 
 \item for some $0\leq k \leq \ell-1$, $G_{[1,k-1]}<A$, $A<g_k<B$, $B<g_{k+1}<C$, and $G_{[k+2,\ell]}>C$. 
 \end{itemize}
  Thus, if we let $Q$ be the set of permutations composed of a single nonempty family of crossing $3$-cycles that may cross over some fixed point(s). Then the generating function $q(t,y) = \sum_{\pi\in Q} t^{c_1(\pi)}y^{c_3(\pi)}$ is given by:  \[q(t,y) = \frac{y(1+2t+t^2)}{1-2y}.\]
\end{lemma}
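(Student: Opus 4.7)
The plan is to first establish two structural facts and then read off the generating function. The facts are (F1) each of $A$, $B$, $C$ is a set of consecutive integers---no fixed point lies strictly between the minimum and maximum of any block---and (F2) each of the two middle gaps $(\max A, \min B)$ and $(\max B, \min C)$ contains at most one fixed point. Given (F1) and (F2), the ordered list $g_1 < \cdots < g_\ell$ of fixed points must partition into some initial segment below $A$, at most one element in $(A,B)$, at most one in $(B,C)$, and a terminal segment above $C$, which matches the four bullets in the statement.

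For (F1), I would argue by contradiction. Suppose first that a fixed point $g$ interrupts $A$, so $a_1 < g < a_2$ for two 3-cycles $(a_1,c_1,b_1)$ and $(a_2,c_2,b_2)$: then positions $g < a_2 < b_1$ carry the values $g, c_2, a_1$ with $a_1 < g < c_2$, a 231 pattern. An interruption of $B$ by $g$ lying between $B$-values $b < b'$ of two crossing 3-cycles splits on Lemma~\ref{lem:crossing 3 cycles}: either $\pi(b) > \pi(b')$ and positions $b < g < b'$ directly give a 231 (both $\pi$-values lie in $A$, hence below $g$), or $\pi(b) < \pi(b')$, in which case the corresponding $C$-values satisfy $c' < c$ and positions $g < c' < c$ carry $g, b', b$ with $b < g < b'$---again a 231. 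The $C$-interruption case is uniform over the two configurations of Lemma~\ref{lem:crossing 3 cycles} and parallel to the $A$ case: for $g$ between two $C$-values, the appropriate choice of the $a$-element of one cycle and the $c$-element of the other, with $g$ between them, gives the required 231. For (F2), a single 3-cycle $(a,c,b)$ suffices: if $g_1 < g_2$ both lie in $(A,B)$, positions $g_1 < g_2 < b$ carry $g_1, g_2, a$ with $a < g_1 < g_2$; if both lie in $(B,C)$, positions $g_1 < g_2 < c$ carry $g_1, g_2, b$ with $b < g_1 < g_2$. Both are 231.

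The generating function then follows by multiplication. By an earlier remark in the paper, there are $2^{m-1}$ families of $m \geq 1$ mutually crossing 3-cycles, yielding $\sum_{m \geq 1} 2^{m-1} y^m = \frac{y}{1-2y}$; by (F1) and (F2), the fixed points crossed over lie only in the two middle gaps, and each gap independently holds $0$ or $1$ fixed point, contributing the factor $(1+t)^2 = 1 + 2t + t^2$. Multiplying gives $q(t,y) = \frac{y(1+t)^2}{1-2y}$ as claimed. The main obstacle is (F1) for $B$ interrupted, since the 231 pattern is witnessed on opposite sides of $g$ in the two subcases of Lemma~\ref{lem:crossing 3 cycles}, forcing a genuine case split rather than a single uniform positional choice.
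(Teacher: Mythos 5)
Your proposal is correct and follows essentially the same route as the paper: establish that $A$, $B$, $C$ are blocks of consecutive integers and that each of the two middle gaps holds at most one fixed point, then multiply $\frac{y}{1-2y}$ by $(1+t)^2$. The only cosmetic difference is that the paper certifies the forbidden configurations by listing explicit length-$7$ and length-$5$ sub-permutations containing $231$, whereas you exhibit the $231$ witnesses positionally; the content is identical.
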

In other words, we can have as many fixed points as we like before or after the collection of crossing 3-cycles, but we can have at most one fixed point under each arc of each 3-cycle. Furthermore, each block $A$, $B$, and $C$ will be a set of consecutive integers, as stated above. 

\begin{proof}
First, let us see that $A, B$, and $C$ must be a block of consecutive integers. If they were not, $\pi$ would contain one of the following as a pattern: 
\begin{align*}
(1,7,4)(2)(3,6,5) &= 7261554 \\ 
(1,7,5)(2)(3,6,4) &= 7263145 \\ 
(1,7,3)(2,6,5)(4) &= 7614253 \\ 
(1,7,5)(2,6,3)(4) &= 7624135 \\ 
(1,7,3)(2,5,4)(6) &= 7512463 \\ 
(1,7,4)(2,5,3)(6) &= 7521364 \\ 
\end{align*}
all of which contain a 231 pattern. Now, let us see that you must not have more than one fixed point under each arc of a 3-cycle. If $\pi$ had such a configuration, it would contain either $(1,5,4)(2)(3) = 52314$ or $(1,5,2)(3)(4)= 51342$ as a pattern, both of which contain a 231 pattern. 
\end{proof}

The last thing we need to consider is how 2-cycles and fixed points can appear in a 231-avoiding permutation. It is straightforward to see that the forbidden configurations involving 2-cycles and fixed points are the ones below:
  \begin{center}
    \begin{tikzpicture}
        \draw(-0.5,0) -- ++ (5,0);
    \foreach \x in {0,...,4}{
       \draw[circle,fill] (\x,0)circle[radius=1mm]node[below]{};
    }
       \draw[line width=.5mm, blue](0,0) to[bend left=45] (4,0);
       \draw[line width=.5mm, blue](2,0) to[bend left=45] (3,0);
    \end{tikzpicture} \quad \quad
        \begin{tikzpicture}
        \draw(-0.5,0) -- ++ (5,0);
    \foreach \x in {0,...,4}{
       \draw[circle,fill] (\x,0)circle[radius=1mm]node[below]{};
    }
       \draw[line width=.5mm, blue](0,0) to[bend left=45] (4,0);
       \draw[line width=.5mm, blue](1,0) to[bend left=45] (2,0);
    \end{tikzpicture} \quad\quad
    \begin{tikzpicture}
        \draw(-0.5,0) -- ++ (4,0);
    \foreach \x in {0,...,3}{
       \draw[circle,fill] (\x,0)circle[radius=1mm]node[below]{};
    }
       \draw[line width=.5mm, blue](0,0) to[bend left=45] (3,0);
    \end{tikzpicture} 
 \end{center}
since these correspond to the patterns 52431, 53241, and 4231 which all contain 231. 
The following lemma follows immediately from this.
\begin{lemma}\label{lem:231-I}
Suppose $I$ is the set of permutations composed of only 2-cycles and fixed points. Then the generating function $i(t,y) = \sum_{\pi\in I} t^{c_1(\pi)}x^{c_2(\pi)}$ is given by: 
\[
i(t,x)= \dfrac{1-x}{1-t-2x}.
\]
\end{lemma}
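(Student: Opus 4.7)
The plan is to decompose a 231-avoiding involution $\pi\in\S_n^{\{1,2\}}(231)$ according to the action at position $1$, writing it uniquely as a concatenation of \emph{atoms} of three types: a single fixed point (weight $t$); a tight nest of $m\geq 1$ 2-cycles $(1,2m)(2,2m-1)\cdots(m,m+1)$ on consecutive positions (weight $x^m$); or a tight nest of $m\geq 1$ 2-cycles surrounding a single central fixed point (weight $tx^m$). The case $\pi(1)=1$ immediately strips off a fixed-point atom, leaving a shorter 231-avoiding involution.

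For the case $\pi(1)=k>1$, so that $(1,k)$ is a 2-cycle with $\pi(k)=1$, the key structural claim is that $\pi$ restricted to positions $\{1,\ldots,k\}$ is the unique decreasing involution $i\mapsto k+1-i$, and that the restriction to $\{k+1,\ldots,n\}$ is an independent 231-avoiding involution of length $n-k$. I would prove this by direct 231-pattern analysis, in the same spirit as the three forbidden configurations drawn above. First, if some $\pi(i)>k$ for $1<i<k$, then the values $k,\pi(i),1$ at positions $1,i,k$ form a 231 pattern, so the restriction of $\pi$ to $\{2,\ldots,k-1\}$ must take values in $\{2,\ldots,k-1\}$. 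Second, if $\pi(i)<\pi(j)$ for some $1<i<j<k$, then $\pi(i),\pi(j),1$ at positions $i,j,k$ is again a 231 pattern, so the inner restriction is strictly decreasing. The unique decreasing involution on $\{2,\ldots,k-1\}$ is precisely a tight nest, which together with the outer arc $(1,k)$ produces the claimed atom on $\{1,\ldots,k\}$, with a central fixed point exactly when $k$ is odd.

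Summing over atom types, the atom generating function is
\[
B(t,x)=t+\sum_{m\geq 1}x^m+t\sum_{m\geq 1}x^m=\frac{t+x}{1-x},
\]
and the sequence construction yields
\[
i(t,x)=\frac{1}{1-B(t,x)}=\frac{1-x}{(1-x)-(t+x)}=\frac{1-x}{1-t-2x},
\]
as claimed. The main obstacle is establishing the structural claim for the initial atom: the three forbidden configurations together with the earlier non-crossing lemma do not in themselves literally exclude every bad arrangement (for instance, two disjoint non-nested 2-cycles nested inside a third, which creates a 231 pattern with the right endpoint of the outer arc), but the direct pattern argument above handles all such cases uniformly and forces the tight-nest structure on $\{1,\ldots,k\}$. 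Once that structural fact is in hand, the generating function computation is routine.
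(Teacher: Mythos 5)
Your proof is correct, and it is worth noting that it is actually more complete than the paper's own argument. The paper simply exhibits three forbidden configurations (the patterns $52431$, $53241$, and $4231$, i.e.\ a fixed point beside a nested $2$-cycle under an arc, or two fixed points under an arc) and asserts the lemma ``follows immediately''; the intended structure in both cases is the same, namely that a $231$-avoiding involution is a sequence of atoms, each a lone fixed point or a tight nest of $2$-cycles with at most one fixed point dead center, giving the atom series $\frac{t+x}{1-x}$ and hence $\frac{1-x}{1-t-2x}$. Your route to that structure --- peeling off the block $\{1,\ldots,\pi(1)\}$ and showing by two direct $231$-pattern arguments that the interior must be the decreasing involution --- is a genuine, self-contained proof, and your parenthetical observation is accurate: the configuration $(1,6)(2,3)(4,5)=632541$ contains $231$ (via $3,5,1$) but is not excluded by the paper's three listed configurations together with the non-crossing lemma, since every four-point sub-diagram it induces is an allowed one. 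Your first-value decomposition handles this case automatically because it forces the interior of the outermost arc to be a single decreasing run, so only one nested component can sit inside it. The generating function computation and the check that the direct-sum split preserves $231$-avoidance independently on each block are both routine and correctly executed.
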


This gives us enough information to enumerate those permutations composed of only fixed points, 2-cycles, and 3-cycles.

\begin{lemma}\label{lem:231-R}
Suppose $R$ is the set of permutations composed of a single nonempty family of crossing $3$-cycles that also cross some positive number of 2-cycles and at least one fixed point. Then the generating function $r(t,x,y) = \sum_{\pi\in S} t^{c_1(\pi)}x^{c_2(\pi)}y^{c_3(\pi)}$ is given by: 
\[
r(t,x,y)= \dfrac{y}{1-2y}\cdot\dfrac{6xt-2x^2t + 2xt^2-x^2t^2}{(1-x)^2}.
\]
\end{lemma}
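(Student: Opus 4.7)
The plan is to factor $r(t,x,y)=\tfrac{y}{1-2y}\cdot F(t,x)$, where $\tfrac{y}{1-2y}=\sum_{m\geq 1}2^{m-1}y^m$ counts crossing families of $3$-cycles (as in the paragraph after Lemma~\ref{lem:crossing 3 cycles}), and the remaining factor $F(t,x)$ counts the accompanying 2-cycles and fixed points. The target is
\[ F(t,x)=\frac{6xt-2x^2t+2xt^2-x^2t^2}{(1-x)^2}=\sum_{k\geq 1}\bigl[(4k+2)t+(k+1)t^2\bigr]x^k, \]
so I need to show there are exactly $4k+2$ configurations with one fixed point and $k+1$ with two (and none with more).

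Following the analysis after Lemma~\ref{lem:forbidden}, the $k\geq 1$ 2-cycles crossing the 3-cycle family form either a single nest in which $B$ occupies one of $2k+1$ gaps among the nest's $2k$ endpoints --- three ``pure'' gaps (all 2-cycles under the left arc, all under the right arc, or all spanning $B$) together with $2k-2$ ``mixed'' gaps in which an outer run of 2-cycles spans $B$ while an inner run lies on a single side --- or two nests of sizes $(k_L,k_R)$ with $k_L,k_R\geq 1$, giving $k-1$ configurations.

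For each such 2-cycle configuration I would enumerate allowed fixed-point placements, combining Lemmas~\ref{lem:231-I} and~\ref{lem:231-Q} with direct checks against the 231 pattern. The resulting counts: a pure single nest on one side admits $2$ slots for one fixed point (nested in the innermost 2-cycle, or directly under the opposite arc) and $1$ configuration for two; a pure single nest spanning $B$ admits $2$ slots for one fixed point (on either side of $B$ inside the innermost 2-cycle) but $0$ for two, because the would-be pair creates a 231 as in $(1,7,4)(2,6)(3)(5)=7631524$; each of the $2k-2$ mixed single nests admits $1$ slot for one fixed point (nested in the innermost 2-cycle) and $0$ for two; each of the $k-1$ two-nest configurations admits $2$ slots for one fixed point and $1$ for two. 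Single nests then contribute $(2k+4)t+2t^2$ per $k$ and two nests contribute $(2k-2)t+(k-1)t^2$, totalling $(4k+2)t+(k+1)t^2$; summing over $k\geq 1$ via $\sum_{k\geq 1}kx^k=x/(1-x)^2$ and $\sum_{k\geq 1}x^k=x/(1-x)$ delivers the claimed formula.

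The main obstacle is the pattern-avoidance verification behind these case counts. Specifically, one must show that placing a fixed point directly under a 3-cycle arc that already contains a 2-cycle endpoint (but not nested strictly inside a 2-cycle) always produces a 231 pattern --- built from the 3-cycle's middle position $b$ (whose image is the small value $a$), the fixed point itself, and a nearby 2-cycle endpoint with an ascending image --- while the configurations I claim are allowed all extend the decreasing value structure of their arc regions and so survive. These verifications are analogous to the explicit small-permutation arguments used in Lemmas~\ref{lem:forbidden}, \ref{lem:231-I}, and~\ref{lem:231-Q}.
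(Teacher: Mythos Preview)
Your proposal is correct and follows essentially the same approach as the paper. Both arguments factor out $\tfrac{y}{1-2y}$ for the crossing family of 3-cycles and then show that for each $k\geq 1$ there are $4k+2$ admissible one-fixed-point configurations and $k+1$ admissible two-fixed-point configurations; your case split (pure nest on one side, pure nest spanning $B$, mixed nest, two nests) is a mild refinement of the paper's split (nest not crossing the 3-cycles versus nest crossing them), and the forbidden-pattern checks you flag as the main obstacle are exactly the ones the paper performs via explicit small permutations such as $(1,6,4)(2)(3,5)=625134$ and $(1,6,3)(2,4)(5)=641253$.
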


\begin{proof}
Let us first consider the case where we have two fixed points. Then each 3-cycle crosses or crosses over both fixed points and 2-cycles. Let us first note that under a single arc of the 3-cycle, we can have a nest of 2-cycles and a fixed point, but in that case, we must have that the fixed point is in the center of the nest of 2-cycles, since otherwise we would create a 231 pattern. Additionally, if a 2-cycle crosses the arcs of a 3-cycle, we cannot have a fixed point appear under the 3-cycle and completely before or after the 2-cycle. If it did, it would contain the patterns $(1,4,6)(2)(3,5)= 625134$ or $(1,3,6)(2,4)(5)=641253$, both of which contain a 231 pattern. Thus if there were two fixed points, we could not have the 2-cycles cross each 3-cycle. Instead, an arc of the 3-cycle would have to completely cross over the 2-cycle. 

Thus if there are two fixed points under the arcs of the 3-cycles, there are $k+1$ ways this can happen, namely having the two fixed points lie in the center of the two nests of 2-cycles in the $k-1$ configurations the contain two nests, or having one in the center of 1 nest that is completely under one of the arcs of the 3-cycle, while the other fixed point is under the under arc. 

On the other hand, if there is only one fixed point, there are $4k+2$ possible configurations. Since there are $k+1$ ways to place $k$ 2-cycles that do not cross, there are clearly $2(k+1)$ ways where no 2-cycle crosses each 3-cycle (since the fixed point can go in exactly one place under the first or second arc of the 3-cycle). There are $2k-1$ ways to place the 2-cycles so that the nest does cross the 3-cycles. In each case, there is exactly one place the fixed point can go, except in the case where $a<E<b<F<c$ for the nest $(E,F)$ and any 3-cycle $(a,c,b)$, in which case there are two places the fixed point can go, namely before or after $b$.  
Thus, we have the following:
\[r(t,x,y)= \dfrac{y}{1-2y}\cdot\left(\dfrac{4xt}{(1-x)^2}+\dfrac{2xt}{(1-x)} + \dfrac{xt^2}{(1-x)^2}+\dfrac{xt^2}{1-x}\right).\]
\end{proof}

\begin{proof}[Proof of Theorem~\ref{thm:231-main}]
Let $\pi\in\S_n^{[3]}(231).$ As an arc diagram, this permutation is a concatenation of collections of crossing 3-cycles (that may or may not cross over fixed points and 2-cycles), separated by possibly empty involutions. In other words, $\pi = \alpha_1\oplus\beta_1\oplus\alpha_2\oplus\beta_2 \oplus \cdots \oplus \beta_k\oplus\alpha_{k+1}$, where for each $i$, $\alpha_i$ is a (possibly empty) involution avoiding 231, and $\beta_i$ is a nonempty set of crossing 3-cycles that may or may not cross over fixed points and 2-cycles. The generating function $i(t,x)$ defined in  Lemma~\ref{lem:231-I} is the generating function for involutions, and as defined in Lemmas~\ref{lem:231-S}, \ref{lem:231-Q}, and \ref{lem:231-R}, the generating function for the  nonempty set of crossing 3-cycles that may or may not cross over fixed points and 2-cycles is $q(t,y) +s(x,y)+r(t,x,y)$. It follows that
\[
B(t,x,y) =  \frac{i(t,x)}{1-i(t,x)\cdot(q(t,y) +s(x,y)+r(t,x,y))},
\]
which is equivalent to the statement in the theorem.
\end{proof}

\section{132-avoiding permutations of order 3}\label{sec:132}


In this section, we enumerate the set of 132-avoiding permutations of order 3, i.e.~those composed of only 3-cycles and fixed points. 

In \cite{AG21}, the authors find a generating function for 132-avoiding permutations composed only of 3-cycles. Before incorporating fixed points, we find a slightly different (but equivalent) generating function for these permutations, that we will then be able to refine to include fixed points.

In this section, we will use the notation $[a,b,c]$ with $a<b<c$ to denote the pair of arcs $a\to b$ and $b \to c$ associated to the 3-cycle containing those three elements, and we will often refer to $[a,b,c]$ itself as a 3-cycle, a 3-arc, or just an arc. Note that since there are two forms of $3$-cycles, the arc $[a,b,c]$ could correspond to the 3-cycle $(a,b,c)$ or $(a,c,b)$.


\begin{lemma}\label{lem: firstComesFirst}  
Given $\pi \in \S_n(132)$, then any pair of $3$-cycles in the cycle decomposition of $\pi$ must appear in the arc diagram of $\pi$ as one of the following configurations. 
\begin{enumerate}[(A.)]
\item \strut

   \begin{tikzpicture}
        \draw(-0.5,0) -- ++ (6,0);
    \foreach \x in {0,...,5}{
       \draw[circle,fill] (\x,0)circle[radius=1mm]node[below]{};
    }
       \draw[line width=.5mm](0,0) to[bend left=45] (2,0);
       \draw[line width=.5mm](2,0) to[bend left=45] (4,0);
       \draw[line width=.5mm](1,0) to[bend left=45] (3,0);
       \draw[line width=.5mm](3,0) to[bend left=45] (5,0);
    \end{tikzpicture}  
\item\strut

    \begin{tikzpicture}
        \draw(-0.5,0) -- ++ (6,0);
    \foreach \x in {0,...,5}{
       \draw[circle,fill] (\x,0)circle[radius=1mm]node[below]{};
    }
       \draw[line width=.5mm](0,0) to[bend left=45] (3,0);
       \draw[line width=.5mm](2,0) to[bend left=45] (4,0);
       \draw[line width=.5mm](1,0) to[bend left=45] (2,0);
       \draw[line width=.5mm](3,0) to[bend left=45] (5,0);
    \end{tikzpicture}  
\item\strut

        \begin{tikzpicture}
        \draw(-0.5,0) -- ++ (6,0);
    \foreach \x in {0,...,5}{
       \draw[circle,fill] (\x,0)circle[radius=1mm]node[below]{};
    }
       \draw[line width=.5mm](0,0) to[bend left=45] (4,0);
       \draw[line width=.5mm](2,0) to[bend left=45] (3,0);
       \draw[line width=.5mm](1,0) to[bend left=45] (2,0);
       \draw[line width=.5mm](4,0) to[bend left=45] (5,0);
    \end{tikzpicture}  
\end{enumerate}
Furthermore, if one 3-cycle is of the form $(1,2,3)$ and the other is of the form $(1,3,2)$, they must appear in the arc diagram as configuration (C.) above.
\end{lemma}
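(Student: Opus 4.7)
The plan is a finite case analysis on the relative positions of two $3$-cycles in the arc diagram. Two disjoint $3$-cycles occupy six positions, and up to swapping the two cycles these positions partition $\{1,\dots,6\}$ into two unordered triples in $\binom{6}{3}/2 = 10$ distinct ways. Exactly three of these partitions correspond to configurations (A), (B), (C) --- namely $\{1,3,5\}\sqcup\{2,4,6\}$, $\{1,4,6\}\sqcup\{2,3,5\}$, and $\{1,5,6\}\sqcup\{2,3,4\}$ --- so it remains to rule out the other seven. Moreover, for any triple $\{a,b,c\}$ with $a<b<c$, the associated $3$-cycle comes in one of two types: $(a,b,c)$, which produces the local one-line word $b\,c\,a$, and $(a,c,b)$, which produces $c\,a\,b$. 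Hence each partition yields four concrete six-letter sub-permutations, and in total there are $40$ candidate sub-permutations to classify.

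First I would handle the seven ``bad'' partitions by exhibiting a $132$-subword in each of the four corresponding sub-permutations. In most cases this is painless: for instance in the partition $\{1,2,3\}\sqcup\{4,5,6\}$, position $1$ always carries a value in $\{2,3\}$, some position in $\{4,5\}$ always carries the value $6$, and a later position always carries a value in $\{4,5\}$ strictly between the value at position $1$ and $6$; these three positions form a $132$ regardless of the two type choices. Analogous explicit witnesses handle the other six bad partitions, showing that any pair of $3$-cycles in a $132$-avoiding permutation must occupy the positions of configuration (A), (B), or (C).

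For the ``furthermore'' clause, I would verify that in configurations (A) and (B) every mixed-type pair (one $3$-cycle of type $(a,b,c)$ and the other of type $(a,c,b)$) contains a $132$. In (A) on positions $\{1,3,5\}\sqcup\{2,4,6\}$, the two mixed sub-permutations are $365214$ and $541632$, which admit $132$-subwords at positions $(1,2,3)$ and $(3,4,5)$ respectively; a parallel short check dispatches the two mixed pairings in (B). This leaves (C) as the unique configuration accommodating mixed cycle types.

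The main obstacle is simply the bookkeeping over roughly $32$ sub-cases; there does not appear to be a high-level structural invariant that separates the allowed configurations from the forbidden ones, so the argument is unavoidably computational. However, each bad sub-case is dispatched by pointing to a single explicit $132$-triple, and once the witnesses are arranged in a table the full verification is quite short.
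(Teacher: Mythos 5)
Your proposal is correct and follows essentially the same route as the paper: both reduce the claim to a finite check of the $\frac{1}{2}\binom{6}{3}=10$ relative position patterns times the $4$ cycle-type assignments (40 sub-permutations), ruling out the bad cases by exhibiting a $132$ occurrence. You simply carry out more of the explicit verification (e.g.\ the witnesses in $365214$ and $541632$) that the paper leaves as a ``straightforward exercise.''
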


\begin{proof}
       There are $\frac{1}{2}\binom{6}{3} = 10$ possible configurations involving these two 3-arcs, and there are 4 possible ways to assign cycle forms to each arc, for a total of 40 possible permutations. It is a straightforward exercise to check that if the two 3-cycles are of the same form, then any configuration other than the ones listed above must contain a 132 pattern. Similarly, if the two cycles have different forms, it is straightforward to check that configuration (C.) is the only one that avoids 132. 

\end{proof}

Let's consider what Lemma~\ref{lem: firstComesFirst}  means for the arc diagram of a permutation composed only of 3-cycles. Given such a permutation on $[n]=[3m]$, and the collection of 3-cycles $[a_i,b_i,c_i]$ with $i\in[m]$ that compose $\pi$, we now know that each arc starts in one of the first $m=n/3$ positions. That is, $A = \{a_1, a_2, \ldots, a_m\} = \{1,2,\ldots, m\}.$ If we fix the sets $B=\{b_1, b_2,\ldots, b_m\}$ and $C=\{c_1,c_2,\ldots, c_m\},$ then we also know that $b_i<b_{i+1}$ and $c_i<c_{i+1}$ for all $i$. 
That is, if we know the second and third entries for each 3-arc as sets, the corresponding arcs connecting the second and third entries must connect a given $b$ to the closest unclaimed $c$.

Furthermore, take $D_{\pi}$ to be the word $w_{j}=0$ if $j=b_i-m$ for some $i$ and let $w_j=1$ if $j=c_i-m$ for some $i$, which must be a Dyck word.  Indeed, if it were not, there would be some $c_i>b_j$ for $i>j$, which is clearly not possible by Lemma~\ref{lem: firstComesFirst}.
For example, consider the permutation \[11 \ 10 \ 9 \ 7 \ 4 \ 3 \ 5 \ 2 \ 6 \ 8 \ 12 \ 1 = (1,11,12)(2,10,8)(3,9,6)(4,7,5)\] with arc diagram
\begin{center}
        \begin{tikzpicture}
        \draw(-0.5,0) -- ++ (12,0);
    \foreach \x in {0,...,11}{
       \draw[circle,fill] (\x,0)circle[radius=1mm]node[below]{};
    }
       \draw[line width=.5mm](0,0) to[bend left=45] (10,0);
        \draw[line width=.5mm](10,0) to[bend left=45] (11,0);
       \draw[line width=.5mm](1,0) to[bend left=45] (7,0);
        \draw[line width=.5mm](7,0) to[bend left=45] (9,0);
       \draw[line width=.5mm](2,0) to[bend left=45] (5,0);
        \draw[line width=.5mm](5,0) to[bend left=45] (8,0);
       \draw[line width=.5mm](3,0) to[bend left=45] (4,0);
        \draw[line width=.5mm](4,0) to[bend left=45] (6,0);
    \end{tikzpicture}  
\end{center}
 Since $A = \{1,2,3,4\}, B=\{5,6,8,11\},$ and $C=\{7,9,10,12\}$, the associated Dyck word is $D_\pi = 00101101$.

Notice also that in the arc diagram associated to this Dyck word, we always connect the $i$-th 0 to the $i$-th 1, since $b_i$ is connected to $c_i$ in the arc diagram for the permutation. This implies that the part of the arc diagram induced by arcs between $B$ and $C$ are exactly the non-nested matchings associated to a Dyck word.

We will call a maximal block of consecutive $0$'s in a Dyck word a {\bf free block} if, in the corresponding arc diagram, they are connected to a block of consecutive $1$'s in the non-nested matching associated to the Dyck word. (We also call the corresponding block of consecutive 1's free.) For example, in the Dyck path $00101101$, all free blocks are size 1. However, in the Dyck path $00011101$ the first group of three $0$'s form a free block of size $3$ because they are connected to a block of consecutive $1$'s. 

We can associate to each permutation a composition $X=(x_1, x_2, \ldots, x_k)$ of $m$, which we will call the {\bf free composition}, based on the size of the free blocks. For example, the Dyck word $0001100110001111$ is associated to the composition $(2,1,1,1,3)$ since we can see the corresponding free blocks of 0's associated to consecutive 1's: 

\[00\ 0\ {\bf11}\ 0 \ 0\ {\bf1} \ {\bf1}\ 000\ {\bf1}\ {\bf111}.\]


Let us say that a Dyck word $D$ of length $2m$ has a {\bf hit} at position $2j$ for $j\in\{0,1,2,\ldots, m\}$ if the number of 0's and the number of 1's in $D$ are both equal to $j$. (In other words the Dyck path associated to Dyck word $D$ touches the ground.) For example, the Dyck word $00101101$ has 3 hits: at 0, 6, and 8.

\begin{lemma}\label{lem: dyck words}
    Let $\pi \in \S_{3m}(132)$ be composed only of 3-cycles $[a_i,b_i,c_i]$ for $i\in[m]$ and let $D_{\pi}$ be the associated Dyck word. Then if the $j$-th and $(j+1)$-st 1 are in a free block together, then cycles associated to $[a_j,b_j,c_j]$ and $[a_{j+1},b_{j+1},c_{j+1}]$ must be of the same form. Furthermore, if $[a_j,b_j,c_j]$ and $[a_{j+1},b_{j+1},c_{j+1}]$ are of different forms, then the Dyck word has a hit at position $2j$. 
\end{lemma}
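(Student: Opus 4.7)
The plan is to prove the ``furthermore'' claim first and to derive the first claim as a short corollary of it. Both claims will rely on the structural constraint given by Lemma~\ref{lem: firstComesFirst}, which forces any pair of $3$-cycles of differing forms in a $132$-avoider to appear in configuration (C.).

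I will begin with the furthermore claim. Suppose $[a_j,b_j,c_j]$ and $[a_{j+1},b_{j+1},c_{j+1}]$ are of different forms. By Lemma~\ref{lem: firstComesFirst}, the two cycles must appear in configuration (C.), in which the ``inner'' $3$-cycle has strictly smaller $b$- and $c$-coordinates than the ``outer'' one. Since the cycles are indexed by the increasing order of the $b_i$'s (equivalently, of the $c_i$'s), the inner cycle must be $[a_j,b_j,c_j]$ and the outer cycle must be $[a_{j+1},b_{j+1},c_{j+1}]$; the key inequality I need to extract is $c_j<b_{j+1}$. Now I will count the characters in the prefix of $D_\pi$ ending at position $c_j-m$, which is the position of the $j$-th $1$. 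By monotonicity of the $c_i$'s, this prefix contains exactly $j$ ones; by monotonicity of the $b_i$'s together with $b_j<c_j<b_{j+1}$, the indices $i$ with $b_i<c_j$ are exactly $i=1,\ldots,j$, so the prefix also contains exactly $j$ zeros. The prefix therefore has length $2j$, so the $j$-th $1$ lies at position $2j$ and $D_\pi$ has a hit at $2j$.

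For the first claim I will argue by contrapositive. Assuming cycles $j$ and $j+1$ have different forms, the previous paragraph places the $j$-th $1$ of $D_\pi$ at position $2j$. This forces the $j$-th $0$ to sit at some position at most $2j-1$ and the $(j+1)$-st $0$ to sit at some position at least $2j+1$, so the $j$-th and $(j+1)$-st $0$'s are not at adjacent positions in $D_\pi$. Since membership of two consecutive matchings in a common free block requires both their $0$-positions and their $1$-positions to be adjacent in $D_\pi$, the $j$-th and $(j+1)$-st $1$'s cannot lie in a common free block, which is the desired contrapositive.

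I expect the only step needing real care is the labeling in configuration (C.): the inner cycle has the smaller $b$ and $c$ coordinates but the \emph{larger} $a$-coordinate, so it is tempting to swap which cycle plays the role of $[a_j,b_j,c_j]$. Once the labeling is pinned down and the inequality $c_j<b_{j+1}$ is in hand, everything else reduces to a single prefix-counting step in the Dyck word, made trivial by the monotonicity of the $b_i$.
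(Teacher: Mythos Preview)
Your proof is correct and follows essentially the same route as the paper: both arguments extract from configuration~(C.) of Lemma~\ref{lem: firstComesFirst} the key inequality $c_j<b_{j+1}$, and then read off both conclusions from the resulting $0101$ pattern of the $j$-th and $(j+1)$-st $0$'s and $1$'s in $D_\pi$. The only difference is organizational: you prove the hit at $2j$ first via an explicit prefix count and deduce the free-block statement from it, whereas the paper observes the $0101$ pattern first to rule out a common free block and then notes that the $j$-th $1$ preceding the $(j+1)$-st $0$ forces the hit.
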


\begin{proof}
    Recall that by Lemma~\ref{lem: firstComesFirst}, if two cycles $[a_j,b_j,c_j]$ and $[a_{j+1},b_{j+1},c_{j+1}]$ are of different forms, then we must have that all of $\{a_j,b_j,c_j\}$ are greater than $a_{j+1}$ and less than $b_{j+1}$ and $c_{j+1}$. This means that in the Dyck path $D_\pi$, the $j$-th and $(j+1)$-st 0's and 1's appear in the order 0101, and so there is no chance of them being part of a free block together. Moreover, since the $j$-th 1 appears before the $(j+1)$-st 0, we must have that the Dyck word has a hit here. 
\end{proof}

The next lemma states that just as we can break 0's and 1's (associated to $B$ and $C$) into free blocks, we can also break $A$ into blocks of the same size using the reverse of the composition $X$, and that elements in corresponding blocks must connect to each other in the arc diagram, as pictured. In this lemma, we will write $A$ and $B$ as disjoint unions of consecutive subsets determined by $X$, namely, that $A=\bigcup_{j=1}^k A_j$ and $B = \bigcup_{j=1}^k B_j$ so that for each $j$,
\begin{itemize}
\item the elements in $A_j$ are less than the elements of $A_{j+1}$,
\item the elements in $B_j$ are less than the elements of $B_{j+1}$
\item  $|A_j|=x_{k-j+1}$ and $|B_{j}|=x_j$. 
\end{itemize}
For example, if $A = \{1,2,3,4,5,6\}$, $B=\{7,8,9,12,13,14\}$, and $X= (2,1,3)$, then $A_1 = \{1,2,3\}, A_2 = \{4\}, A_3=\{5,6\}$ and $B_1=\{7,8\}, B_2=\{9\}, B_3=\{12,13,14\}$. 

\begin{lemma}\label{lem: splitOnes}
    Let $\pi \in \S_{3m}(132)$ be composed only of 3-cycles $[a_i,b_i,c_i]$ for $i\in[m]$, let $D_{\pi}$ be the associated Dyck word, and let $X = (x_1, \ldots, x_k)$ be the 
    free composition of $D_{\pi}.$ Let $A=\bigcup_{j=1}^k A_j$ and $B = \bigcup_{j=1}^k B_j$ as described above. Then the arc diagram obtained by considering only the arcs between $A$ and $B$ send elements in $A_{k-j+1}$ to elements in $B_j$.
\end{lemma}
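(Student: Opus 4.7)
The plan is to prove the following \emph{key claim}: if cycles $P$ and $Q$ (labelled so that $b_P<b_Q$) lie in different free blocks of the Dyck word $D_\pi$, then the pair $(P,Q)$ appears in the arc diagram as configuration (B) or (C) from Lemma~\ref{lem: firstComesFirst}, and in particular $a_P>a_Q$. Granting the key claim, the lemma follows: among the cycles whose $b$-values lie in $B_j$, the $a$-values are forced to exceed all $a$-values of cycles whose $b$-values lie in any later block $B_{j'}$ with $j<j'$, so the $a$-preimage of $B_1$ is the top $x_1$ values of $A$ (namely $A_k$), the $a$-preimage of $B_2$ is the next $x_2$ values ($A_{k-1}$), and so on, yielding $A_{k-j+1}\to B_j$.

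To prove the key claim, suppose for contradiction that $P,Q$ are in different free blocks but appear in configuration (A), so $a_P<a_Q<b_P<b_Q<c_P<c_Q$. Because $P$ and $Q$ lie in distinct free blocks, at some block boundary between them the Dyck word is interrupted by another cycle's position; tracing this interruption (and using Lemma~\ref{lem: firstComesFirst} to rule out the ``nested'' arrangement $b_P<b_R<c_R<c_P$ for any intermediate cycle $R$) shows that there is a \emph{witness cycle} $R$ of one of two types:
\begin{itemize}
\item[(a)] $b_R<b_P$ and $c_R\in(b_P,b_Q)$, or
\item[(b)] $b_R>b_Q$ and $b_R\in(c_P,c_Q)$.
\end{itemize}
In case (a), Lemma~\ref{lem: firstComesFirst} forces $(R,Q)$ into configuration (C) (since $c_R<b_Q$), giving $a_R>a_Q$, and then forces $(R,P)$ into (B) (since (A) would yield $a_R<a_P<a_Q$, contradicting $a_R>a_Q$), giving $a_R>a_P$. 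Case (b) is symmetric: $(P,R)$ is in (C) and $(Q,R)$ is in (B). In both cases the final clause of Lemma~\ref{lem: firstComesFirst} forces $P, Q, R$ to share a common cycle form, because configurations (A) and (B) only relate same-form cycles.

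It then remains to exhibit a 132 pattern under each of the two possible common forms. In case (a) under form $(a,b,c)$, the positions $a_P<a_Q<b_R$ hold values $b_P, b_Q, c_R$ with $b_P<c_R<b_Q$, which is a 132 pattern; under form $(a,c,b)$, the positions $b_P<c_R<b_Q$ hold values $a_P, b_R, a_Q$ with $a_P<a_Q<b_R$, again a 132 pattern. Case (b) produces 132 patterns at $c_P<b_R<c_Q$ in both forms by an analogous check. These contradictions establish the key claim and hence the lemma. The main obstacle is carrying out the case analysis cleanly across the two possible cycle forms and across witness types, which is manageable thanks to the final clause of Lemma~\ref{lem: firstComesFirst} collapsing what would otherwise be a combinatorial explosion of mixed-form subcases.
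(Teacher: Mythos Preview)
Your approach is essentially the paper's own argument, carried out more carefully: both reduce to showing that whenever $b_P<b_Q$ lie in distinct free blocks one has $a_P>a_Q$, both assume configuration~(A) for contradiction, and both locate a witness cycle coming from the block boundary to manufacture a $132$. Your version is in fact tidier in two respects. First, you treat arbitrary $P,Q$ directly, whereas the paper phrases the claim for consecutive indices $i,i+1$; read literally that adjacent case does not obviously imply the set-level conclusion $A_{k-j+1}\to B_j$, so your formulation is the right one. Second, you invoke the final clause of Lemma~\ref{lem: firstComesFirst} to collapse the mixed-form subcases at once, which the paper does more laboriously.

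There is, however, one computational slip. In case~(b) under the cycle form $(a,c,b)$ (so $\pi(a)=c$, $\pi(c)=b$, $\pi(b)=a$), the positions $c_P<b_R<c_Q$ carry the values
\[
\pi(c_P)=b_P,\qquad \pi(b_R)=a_R,\qquad \pi(c_Q)=b_Q,
\]
and since $a_R\in A$ while $b_P,b_Q\in B$, we have $a_R<b_P<b_Q$; this triple realises the pattern $213$, not $132$. The fix is immediate: take instead the positions $a_P<a_Q<c_R$, which carry values $c_P,\,c_Q,\,b_R$. By the hypothesis of case~(b) we have $c_P<b_R<c_Q$, so this is a genuine $132$. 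With this one correction your argument is complete.
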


\begin{proof}
We need only see that if two elements $b_i<b_{i+1}$ of $B$ are not in a free block together, then we must have that $a_i>a_{i+1}.$ If the cycles associated to $[a_i,b_i,c_i]$ and $[a_{i+1}, b_{i+1},c_{i+1}]$ are of different forms, then this lemma follows from Lemma~\ref{lem: firstComesFirst}. Therefore suppose the two 3-cycles are of the same form and for the sake of contradiction, suppose the lemma doesn't hold. Then we have $a_i<a_{i+1}<b_i<b_{i+1}$ and by Lemma~\ref{lem: firstComesFirst}, $c_i<c_{i+1}$. Since they are not in a free block together, we must also have that there is some $j<i$ so that either $b_i<c_j<b_{i+1}$ or some $j>i$ so that $c_i<b_j<c_{i+1}$.
In the first case, the diagram corresponding to $b_j,b_i,b_{i+1}, c_j,c_i,c_{i+1}$ looks like the following. 
    \begin{center}
    \begin{tikzpicture}
        \draw(-0.5,0) -- ++ (6,0);
    \foreach \x in {0,...,5}{
       \draw[circle,fill] (\x,0)circle[radius=1mm]node[below]{};
    }
     \draw[line width=.5mm, color=red](0,0) to[bend left=45] (2,0);
       \draw[line width=.5mm](1,0) to[bend left=45] (4,0);
       \draw[line width=.5mm](3,0) to[bend left=45] (5,0);
    \end{tikzpicture}

\end{center}
 Extending this to include $a_j,a_i,a_{i+1}$, taking $a_i<a_{i+1}$, we would have something like this:
    \begin{center}
    \begin{tikzpicture}
        \draw(-0.5,0) -- ++ (9,0);
    \foreach \x in {0,...,8}{
       \draw[circle,fill] (\x,0)circle[radius=1mm]node[below]{};
    }
     \draw[line width=.5mm, color=red](3,0) to[bend left=45] (5,0);
     \draw[line width=.5mm, color=red](2.5,.35) to[bend left=25] (3,0);
       \draw[line width=.5mm](4,0) to[bend left=45] (7,0);
       \draw[line width=.5mm](6,0) to[bend left=45] (8,0);
         \draw[line width=.5mm](0,0) to[bend left=45] (4,0);
           \draw[line width=.5mm](1,0) to[bend left=45] (6,0);
           \node at (2.2,.35) {\color{red}$\ldots$};
    \end{tikzpicture}
\end{center}
If the cycles are of the form $(1,2,3)$, then if $[a_j,b_j,c_j]$ is of the form $(1,2,3)$ then $\pi_{a_i}\pi_{a_{i+1}}\pi_{b_j} = b_ib_{i+1}c_j$ is a 132 pattern. If $[a_j,b_j,c_j]$ is of the form $(1,3,2)$, then $\pi_{b_j}\pi_{b_{i+1}}\pi_{c_{i+1}} = a_jc_{i+1}a_{i+1}$, so in order to avoid 132, we must have  $a_j>a_{i+1}$, but then $\pi_{a_i}\pi_{a_{i+1}}\pi_{a_j} = b_ib_{i+1}c_j$ is a 132 pattern. A similar argument holds if both of the cycles are of the form $(1,3,2)$, and a similar argument also holds in the case that there is some  $j>i$ so that $c_i<b_j<c_{i+1}$.
\end{proof}

Below, let $C_r$ denote the $r$-th Catalan number. 
\begin{lemma} \label{lem: FirstCatalan}
Suppose $D$ is a Dyck word with $\ell$ hits and with free composition $X= (x_1, x_2, \ldots, x_k)$. Then there are $2^{\ell-1} C_{x_1}C_{x_2}\cdots C_{x_r}$ permutations $\pi \in \S_{3m}(132)$ composed only of 3-cycles with $D_\pi=D$. 
\end{lemma}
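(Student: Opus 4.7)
The plan is to decompose each permutation with $D_\pi = D$ into two independent sets of choices: a matching permutation $\sigma_j \in \S_{x_j}$ inside each free block, and a cycle-form label on each of the $\ell - 1$ maximal sub-Dyck-paths of $D$ bounded by consecutive hits. I will count these choices separately and verify they are in bijection with the 132-avoiding permutations of $\S_{3m}$ having $D_\pi = D$, yielding the product $2^{\ell-1} \prod_{j=1}^k C_{x_j}$.

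For the matching count, Lemma~\ref{lem: splitOnes} already forces the block-level structure of the $A$-to-$B$ arcs: the block $A_{k-j+1}$ is mapped bijectively onto $B_j$. Inside a given free block of size $r = x_j$, write the elements as $a_1 < \cdots < a_r$, $b_1 < \cdots < b_r$, $c_1 < \cdots < c_r$ (each a consecutive interval of integers) with the forced $B$-to-$C$ arcs $b_i \to c_i$, and record the internal matching as a permutation $\sigma_j \in \S_r$ via $a_i \mapsto b_{\sigma_j(i)}$. I claim the valid $\sigma_j$ are exactly the 132-avoiders, giving $C_{x_j}$ choices. Necessity is immediate: a 132 occurrence in $\sigma_j$ at indices $i_1 < i_2 < i_3$ lifts to the 132 pattern $b_{\sigma_j(i_1)}, b_{\sigma_j(i_2)}, b_{\sigma_j(i_3)}$ at positions $a_{i_1}, a_{i_2}, a_{i_3}$. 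Sufficiency is verified by a case split on how the three positions of a hypothetical 132 pattern distribute among the $a$-, $b$-, and $c$-positions of the block; the $b$-positions carry the increasing sequence of $c$'s, the $c$-positions carry an $\sigma_j^{-1}$-patterned sequence of $a$'s (and $\sigma_j^{-1}$ is 132-avoiding iff $\sigma_j$ is), and every mixed case is killed automatically by the total ordering $A < B < C$ on the value sets.

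For the cycle-form count, Lemma~\ref{lem: dyck words} tells us that consecutive 3-cycles (in the index $j$) can take different cycle forms only when $D$ has a hit at position $2j$. Equivalently, cycle form is constant along each of the $\ell - 1$ maximal sub-Dyck-paths of $D$ bounded by consecutive hits, and each such sub-path contributes one independent binary choice of form, for $2^{\ell-1}$ possibilities overall.

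The main obstacle I expect is ruling out 132 patterns that straddle two or more free blocks or sub-paths, so that the two counts combine independently. The key observation is that Lemma~\ref{lem: splitOnes}'s reversed pairing $A_{k-j+1} \leftrightarrow B_j$ forces $\pi$ restricted to the $A$-positions to be a concatenation of 132-avoiders whose value ranges are strictly decreasing in block index: early $A$-entries receive large $b$-values in $B_k$, while late $A$-entries receive small $b$-values in $B_1$. A concatenation of 132-avoiders with strictly decreasing block values is itself 132-avoiding, because the initial ascent required of a 132 occurrence cannot cross a block boundary. An analogous argument with roles rotated handles the $C$-positions, and as before every mixed pattern across $A$, $B$, $C$ is killed by the total order on the value sets. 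A brief check that the cycle-form choices at different sub-paths likewise do not interact (they only affect which of the two 3-cycles sits on a given arc diagram, which was already fixed by $D$ and the $\sigma_j$'s) completes the bijection and yields the count $2^{\ell-1} C_{x_1} C_{x_2} \cdots C_{x_k}$.
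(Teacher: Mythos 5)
Your overall decomposition is the same as the paper's: Catalan-many internal arrangements per free block (with the observation that the arrangement of the $a$-entries and that of the $c$-entries are inverse patterns, so one avoids $132$ iff the other does), times $2^{\ell-1}$ cycle-form choices, one per segment of $D$ between consecutive hits. Your treatment of the Catalan factor is fine, and your argument that patterns cannot straddle free blocks (a concatenation of $132$-avoiders whose value ranges decrease in block index is $132$-avoiding) is actually spelled out more explicitly than in the paper, which essentially asserts blockwise independence.

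There is, however, one genuine gap, and it sits exactly where the paper does its real work. For the factor $2^{\ell-1}$ you only prove one direction: Lemma~\ref{lem: dyck words} shows that a change of cycle form \emph{forces} a hit, so $2^{\ell-1}$ is an upper bound on the number of form-assignments. You still must show that \emph{every} such assignment yields a $132$-avoiding permutation, i.e.\ that switching form at a hit never creates a $132$. Your parenthetical justification --- that the form choices ``only affect which of the two 3-cycles sits on a given arc diagram, which was already fixed'' --- does not establish this: the arc diagram is indeed fixed, but the two cycle forms $(a,b,c)$ and $(a,c,b)$ produce completely different one-line notations (a $231$ versus a $312$ on the positions $a<b<c$), so the pattern content of $\pi$ changes with each form choice and new $132$ occurrences could in principle appear across the boundary between two segments of different forms. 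The paper closes this by examining the nested picture at a hit (an outer segment of one form surrounding an inner segment of the other) and ruling out, case by case, each way a $132$ could be split between the two segments, using that the inner segment occupies consecutive values, that the outer entries to its left all exceed it, and that the outer entries to its right that exceed it are increasing. Some argument of this kind is needed; without it your ``brief check'' is an assertion, not a proof.
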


\begin{proof}
Let  $\pi \in \S_{3m}(132)$ be composed only of 3-cycles $[a_i,b_i,c_i]$ with $i\in[m]$ and let $D_\pi$ be the associated Dyck word. 
    Being a free block means that the second and third entries of all the 3-cycles corresponding to this the block are each consecutive blocks of integers and Lemma \ref{lem: splitOnes} shows the same is true of your first entries. Suppose a subset $B'\subseteq B$ corresponds to a free block of size $r$ (and $A'$ and $C'$ are the corresponding blocks in $A$ and $C$ respectively). For any $\sigma \in \S_r(132)$, we claim that if you arrange your first entries according to $\sigma$, this will not create a $132$ in $\pi$.

   Since these are blocks of consecutive integers that form a free block, all associated 3-cycles are of the same form. Thus we must have that either all elements of $B$ appear before all elements of $C$ which appear before all elements of $A$ (or all elements of  $C$ appear before all elements of $A$ which appear before all elements of $B$.) In the first case, one way a 132 could appear is if the 1 appeared in $B$ and the $32$ appeared in $C$. However, by Lemma~\ref{lem: firstComesFirst}, we know that if $b_1<b_2<\ldots<b_r$ then $c_1<c_2<\ldots<c_r$ and so the elements in $C$ are in increasing order, so cannot be the 32 in a 132 pattern. The only other way a 132 pattern could appear is within $A$ or within $C$. Therefore, we make the requirement that the way $C$ is arranged avoids 132, of which there are $C_r$ ways. This automatically implies that the elements of $A$ avoid $132^{-1}=132$ as well. A similar argument works for the other case, when the blocks appear in the order $C,A,B$ in the one-line notation. 
   
   Since we can do this for each free block, there are $C_{x_1}C_{x_2}\cdots C_{x_r}$ possible arc diagrams associated to 132-avoiding permutations. It remains to consider what the cycle form for each arc is. 
   
  We know from Lemma~\ref{lem: dyck words} that if we switch cycle forms, we must have a hit in the corresponding Dyck word. We will see here that if we have a hit, we are indeed allowed to switch cycle forms. Notice that if a Dyck path hits the ground, the corresponding arc diagram looks like the following, where thickened arcs represent larger arrangements of 3-arcs.    \begin{center}
    \begin{tikzpicture}
        \draw(-0.5,0) -- ++ (6,0);
    \foreach \x in {0,...,5}{
       \draw[circle,fill] (\x,0)circle[radius=1mm]node[below]{};
    }
       \draw[line width=1mm, blue](0,0) to[bend left=45] (4,0);
       \draw[line width=1mm, blue](4,0) to[bend left=45] (5,0);
       \draw[line width=1mm,red](1,0) to[bend left=45] (2,0);
       \draw[line width=1mm,red](2,0) to[bend left=45] (3,0);
    \end{tikzpicture}
    \end{center}
    To prove the lemma, suppose the thickened blue arc represents some arrangement of 3-cycles of the form $(1,3,2)$ and the thickened red arc represents some allowable arrangement of 3-cycles of the form $(1,2,3)$. 
    Both thickened arcs avoid $132$. Because all the red entries are consecutive we cannot have ${\color{red} 13}{\color{blue} 2}$. Furthermore, all the blue entries to the left of the red entries are larger than all the red entries. So we cannot have ${\color{blue} 13}{\color{red} 2}$, nor can we have ${\color{blue} 1} {\color{red} 32}$.
    Finally, we cannot have ${\color{red} 1}{\color{blue} 32}$ because all the entries to the right of the red entries that are larger than them must be increasing.
 If we swap from $(1,3,2)$ to $(1,2,3)$ when the Dyck path hits the ground, similar arguments show we cannot get a $132$.
\end{proof}

Overall, this shows that our permutations look like:
\begin{center}
    \begin{tikzpicture}
        \draw(-0.5,0) -- ++ (9,0);
    \foreach \x in {0,...,8}{
       \draw[circle,fill] (\x,0)circle[radius=1mm]node[below]{};
    }
        \draw[line width=1mm,red](0,0) to[bend left=45] (7,0);
       \draw[line width=1mm, blue](1,0) to[bend left=45] (5,0);
       \draw[line width=1mm, blue](5,0) to[bend left=45] (6,0);
       \draw[line width=1mm,red](2,0) to[bend left=45] (3,0);
       \draw[line width=1mm,red](3,0) to[bend left=45] (4,0);
       \draw[line width=1mm,red](7,0) to[bend left=45] (8,0);
    \end{tikzpicture}
\end{center}
where here, each thickened arc represents an allowable arrangement of 3-cycles of a fixed type. 

   To summarize what we have so far, we know that permutations in $S_{3m}(132)$ that are composed of $m$ 3-cycles are each associated to a Dyck word and that the number of such permutations associated to a given Dyck word depends on its free composition and the number of hits it has. 
   
     In \cite[Lemma 3.6]{AG21}, the authors show that there are $M_{k-1}$ Dyck words with free composition $(x_1,\dots,x_k)$ via a bijection where $M_{k-1}$ is the $(k-1)$-st Motkzin path. We will refine this bijection to determine the number of Dyck words with free composition $(x_1,\dots,x_k)$ that have exactly $\ell$ hits. 
     
     The bijection from \cite{AG21} works as follows: start with Dyck word $D$ of length $2m$ associated to composition $(x_1,\dots,x_k)$ of $m$. We can reduce this Dyck word to a Dyck word $D'$ of length $2k$ by taking each free block of 0's and compressing it to a single 0 and each free block of 1's and rewriting it with a single 1. Next, build a Motzkin path $M(D) = m_1m_2\ldots m_k$ by setting $m_i=u$ if both the $i$-th 0 and $i$-th 1 in $D'$ are followed by a 0, setting $m_i=d$ if both the $i$-th 0 and $i$-th 1 are followed by 1, and setting $m_i=f$ if the $i$-th 0 is followed by 1 but the $i$-th 1 is followed by 0. Because $D'$ is reduced, we will never have the $i$-th 0 followed by 0 and the $i$-th 1 followed by 1. For a fixed composition, this can clearly be undone. 
     
     For example, consider the Dyck word $D=000110011000111101$ of length 18 associated to the composition $X=(2,1,1,1,3,1)$ of length $k=6$. We can find $D' = 001001101101$ of length 12, which in turn is associated to the Motzkin path $M^D=ududf$ of length $k-1=5.$
   
   \begin{lemma} \label{lem: DyckMotzkinBij}
    Let $X = (x_1,\dots,x_k)$ be a composition of $n$. Dyck words with free composition $X$ and $\ell$ hits are in bijection with Motzkin paths of length $k-1$ with $\ell-2$ flats at level zero.
\end{lemma}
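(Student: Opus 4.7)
The plan is to upgrade the bijection $D \mapsto M^D$ from \cite{AG21} to a hit-tracking bijection. I will first reduce to the compressed Dyck word $D'$ and show this step preserves the number of hits, then prove that the bijection $D' \mapsto M^D$ is multiplicative with respect to the natural decomposition of Dyck words at hits, and finally read off the statistic correspondence.

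For the first reduction, any hit of $D$ at position $t$ corresponds to a factorization $D=D_a D_b$ into Dyck words, with $D_a$ ending in a $1$ and $D_b$ starting with a $0$. Since free blocks are maximal runs of consecutive $0$'s (respectively $1$'s), no free block of $D$ straddles position $t$, so free-block compression preserves the factorization: $D'=D'_a D'_b$, giving a hit of $D'$ at the corresponding boundary. The converse is immediate by uncompressing, so hits of $D$ and hits of $D'$ are in natural bijection.

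Next I would establish the multiplicative property: if $D'=D'_1 D'_2$ is a concatenation of reduced Dyck words of lengths $2j$ and $2(k-j)$, then $M^{D}=M^{D'_1}\cdot f\cdot M^{D'_2}$ with the middle $f$-step at level $0$. The prefix and suffix identifications amount to checking that, for $i<j$, the character immediately after the $i$-th $0$ and immediately after the $i$-th $1$ in $D'$ both lie strictly inside $D'_1$ (and dually for $i>j$ inside $D'_2$), so the step $m_i$ is unchanged when computed within the factor. The junction step $i=j$ is the key point: since $D'_1$ is a Dyck word of length $2j$ its last character is a $1$, hence the $j$-th $1$ of $D'$ sits at position $2j$ and is followed by the first character of $D'_2$, which is a $0$; meanwhile the $j$-th $0$ of $D'$ sits at some position $\le 2j-1$, and the character immediately after it cannot be a $0$ (since the next $0$ of $D'$ is at position $2j+1$), so it must be a $1$. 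Together these force $m_j=f$. The level of $M^D$ just before this step equals the terminal level of $M^{D'_1}$, which is $0$ since $M^{D'_1}$ is a full Motzkin path under the \cite{AG21} bijection.

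With multiplicativity in hand, the statistic correspondence is immediate. An interior hit of $D'$ at position $2j$ produces an $f$-step at level $0$ at position $j$ of $M^D$. Conversely, an $f$-step at level $0$ at position $j$ splits $M^D$ as $M_1\cdot f\cdot M_2$ where $M_1$ and $M_2$ are themselves full Motzkin paths; the inverse bijection sends these to reduced Dyck words $E_1, E_2$, and multiplicativity then gives $M^{E_1 E_2}=M^D$, so injectivity forces $D'=E_1 E_2$, yielding a hit at $2j$. Including the two endpoint hits of $D'$ at positions $0$ and $2k$, an $\ell$-hit reduced Dyck word corresponds to a Motzkin path of length $k-1$ with $\ell-2$ flats at level $0$, as claimed. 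The main obstacle is the junction-step verification in the multiplicative property; the rest amounts to bookkeeping combined with the injectivity of the underlying bijection.
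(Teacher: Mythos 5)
Your proposal is correct and follows essentially the same route as the paper: both pass to the reduced Dyck word $D'$, observe that hits of $D$ survive compression, and then show that an interior hit at position $2j$ forces the $j$-th Motzkin step to be an $f$ at level zero because the prefix of $M^D$ coming from the first factor is itself a complete Motzkin path. Your write-up is somewhat more thorough than the paper's in that you also verify the converse direction (that a level-zero flat comes from a hit) via the multiplicativity and injectivity of the underlying bijection, which the paper leaves implicit.
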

\begin{proof}
    Under the bijection described in \cite{AG21} (and above), whenever the Dyck word has a hit (except for the times at the beginning and end of the Dyck word), this corresponds to the Motzkin path having a flat at level zero. Indeed, if the Dyck word hits the ground, we must have that the reduced Dyck word $D'$ hits the ground as well. In this case, we will have a flat since the the last 0 in that initial part of the word must be followed by 1, while the last 1 in the part of the word must be followed by 0. Furthermore, this flat occurs at level zero since the initial subword itself is a Dyck word, and thus the corresponding Motzkin word to that part of the Dyck word is also itself a Motzkin path.
\end{proof}

It is well known that the generating function for Motzkin paths satisfies $m(x) = xm(x) + x^2m(x)^2 +1$ by decomposing the paths. We can similarly modify this to solve for generating function for the number $M_{k,j}$ of Motzkin paths of length $k$ with $j$ flats at level zero. Namely, if $m(t,x) = \sum_{k,j\geq 0} M_{k,j}x^kt^j$ then $m(t,k) = 1+txm(t,k) + x^2m(t,x)m(x)$. This gives us the generating function:
\[
m(t,x) = \dfrac{2}{1 -2tx +x +\sqrt{1-2x-3x^2}}.
\]
Taken altogether, we get an alternate version of \cite[Theorem 3.12]{AG21}, stated below.  Here $c(x)$
denotes the generating function for the Catalan numbers: 
 \[
 c(x) = \dfrac{1 -\sqrt{1 - 4x}}{2x}.
 \]

\begin{theorem}
Suppose $A^{\{3\}}_{132}(z)$ is the generating function for the number of 132-avoiding permutations composed only of 3-cycles, then 
\[
A^{\{3\}}_{132}(z) = 2(c(z^3)-1)m(2,c(z^3)-1).
\]
\end{theorem}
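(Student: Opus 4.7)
The plan is to assemble the three preceding lemmas into a single generating-function identity by summing over the Dyck-word statistic. By Lemma~\ref{lem: FirstCatalan}, the number of permutations $\pi \in \S_{3m}(132)$ composed only of 3-cycles whose associated Dyck word has free composition $X = (x_1, \ldots, x_k)$ and exactly $\ell$ hits is $2^{\ell-1}\prod_{i=1}^k C_{x_i}$. By Lemma~\ref{lem: DyckMotzkinBij}, the number of Dyck words with this fixed free composition and hit count is $M_{k-1,\ell-2}$, the number of Motzkin paths of length $k-1$ with $\ell-2$ flats at level zero. The total count is therefore a triple sum over $k$, over compositions of length $k$, and over the hit count $\ell$.

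First I would evaluate the inner sum over $\ell$. Since every Dyck word touches the ground at both endpoints, $\ell \geq 2$, and writing $j = \ell-2$ gives
\[
\sum_{\ell \geq 2} 2^{\ell-1} M_{k-1,\ell-2} = 2 \sum_{j \geq 0} M_{k-1,j}\, 2^j = 2\,[x^{k-1}]\, m(2,x),
\]
where $m(t,x)$ is the bivariate Motzkin series computed in the excerpt. Next I would sum over positive integer compositions of fixed length $k$, weighted by $z^{3m}\prod_i C_{x_i}$: since $c(z^3) - 1 = \sum_{x \geq 1} C_x z^{3x}$ is the generating function for a single nonempty free block, a concatenation of $k$ such blocks contributes exactly $(c(z^3) - 1)^k$.

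Finally I would combine these contributions and sum over $k \geq 1$, using the elementary identity $\sum_{k \geq 1} u^k [x^{k-1}]\, f(x) = u\, f(u)$ with $u = c(z^3) - 1$ and $f(x) = m(2,x)$. This yields
\[
A^{\{3\}}_{132}(z) = 2 \sum_{k \geq 1} (c(z^3) - 1)^k\, [x^{k-1}]\, m(2,x) = 2\bigl(c(z^3) - 1\bigr)\, m\bigl(2,\, c(z^3) - 1\bigr),
\]
which is the claimed formula. No step is particularly deep; the main care is bookkeeping: interpreting $\ell - 2$ as the number of interior hits (equivalently, the number of flats at level zero in the corresponding Motzkin path), pulling out exactly one factor of $2$ from $2^{\ell-1} = 2 \cdot 2^{\ell-2}$, and lining up the coefficient extraction $[x^{k-1}]$ against $(c(z^3)-1)^k$ so that the geometric-style collapse at the end is justified.
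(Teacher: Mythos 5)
Your proposal is correct and follows essentially the same route as the paper: both rest on Lemma~\ref{lem: FirstCatalan} and Lemma~\ref{lem: DyckMotzkinBij}, decomposing the count over the free composition and the number of hits, with the factor $t=2$ in $m(2,x)$ absorbing the $2^{\ell-2}$ choices at interior hits and the prefactor $2(c(z^3)-1)$ accounting for the initial cycle-form choice and the extra block beyond the Motzkin path's length. Your write-up simply makes explicit the triple sum and coefficient-extraction bookkeeping that the paper's proof describes verbally.
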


\begin{proof}
Here, we have 2 initial choices for the cycle form of the arcs and have 2 more choices (to keep the same cycle form or switch) every time the Motzkin path has a flat at level zero. Additionally, we can build a composition of the length one more than the length of a the Motzkin path, and there are Catalan-many choices for the permutation associated to each element of the Motzkin path. Notice that since our compositions are not weak compositions, we must use $c(z^3)-1$, not allowing for the empty Catalan object. 
\end{proof}
    
Finally, we want to understand where fixed points can appear relative to the 3-cycles in a 132-avoiding permutation of order 3. 

\begin{lemma} \label{lem: fixedPoints}
    Given $\pi \in \S_n(231)$ of order three, fixed points can only occur after all the first entries in positions, specifically in places where the Dyck word corresponding to the second and third entries have hits.
\end{lemma}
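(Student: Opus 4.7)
The lemma sits in the $132$-avoiding section and leans on the Dyck-word apparatus set up there, so I will treat the hypothesis $\pi\in\S_n(231)$ as a slip for $\pi\in\S_n(132)$; under that reading, the plan is to prove both assertions by exhibiting forbidden $132$ subsequences in a case split on the cycle form of each $3$-cycle. Write $\pi$ using $3$-cycles $[a_i,b_i,c_i]$ (indexed so that $a_i<b_i<c_i$) together with some number of fixed points.

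\emph{Part 1: every fixed point lies positionally after every first entry.} Suppose for contradiction that a fixed point $g$ sits at a position with $g<a_j$ for some first entry $a_j$; since $g$ is a fixed point, $g<a_j<b_j<c_j$ as values. In cycle form $(a_j,b_j,c_j)$ the values at positions $g,a_j,b_j,c_j$ are $g,b_j,c_j,a_j$, and the subsequence at positions $(g,b_j,c_j)$ reads $g,c_j,a_j$; since $g<a_j<c_j$ this is a $132$ pattern. In cycle form $(a_j,c_j,b_j)$ the values are $g,c_j,a_j,b_j$, and the subsequence at positions $(g,a_j,c_j)$ reads $g,c_j,b_j$; since $g<b_j<c_j$ this is again a $132$ pattern. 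Both cases contradict pattern-avoidance, so every fixed point is positionally after every first entry, and consequently the first entries $a_1<\cdots<a_m$ occupy exactly positions $\{1,\ldots,m\}$.

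\emph{Part 2: each fixed point sits at a hit of $D_\pi$.} On the remaining positions $\{m+1,\ldots,n\}$, mark each $b_i$ by $0$ and each $c_i$ by $1$ (skipping fixed points) to form the Dyck word $D_\pi$ from Lemmas~\ref{lem: firstComesFirst}--\ref{lem: FirstCatalan}. Suppose a fixed point $g$ occupies a location where $D_\pi$ fails to touch the ground. Then some $b_j$ is positionally before $g$ while its partner $c_j$ is positionally after $g$, i.e., $a_j<b_j<g<c_j$. In cycle form $(a_j,b_j,c_j)$, positions $(a_j,b_j,g)$ carry values $b_j,c_j,g$, a $132$ pattern because $b_j<g<c_j$. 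In cycle form $(a_j,c_j,b_j)$, positions $(b_j,g,c_j)$ carry values $a_j,g,b_j$, a $132$ pattern because $a_j<b_j<g$. Both cases again contradict pattern-avoidance, so the Dyck word must hit the ground at $g$.

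The main obstacle is, in each part, choosing the right three positions within each cycle form so that the induced subsequence is exactly $132$. Once those choices are in hand the rest is mechanical: Part 1 forces the first entries to fill $\{1,\ldots,m\}$, and in Part 2 the presence of an unmatched $b_j$ before $g$ is precisely the condition that the Dyck word does not hit the ground at $g$, so the contrapositive is exactly the statement that fixed points correspond to hits.
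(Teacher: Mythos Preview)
Your proof is correct. Part~1 is essentially the paper's argument: the paper picks positions $(g,a_j,c_j)$ in both cycle-form cases, while you pick $(g,b_j,c_j)$ in the $(a_j,b_j,c_j)$ case; either choice produces a $132$.

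Part~2, however, takes a genuinely different route. You prove the \emph{necessity} direction---if the Dyck word is strictly above ground at the fixed point's location then there is an unmatched $b_j$ with $b_j<g<c_j$, and you exhibit an explicit $132$ in each cycle form. The paper instead proves the \emph{sufficiency} direction: it argues directly that a fixed point inserted at a hit cannot serve as the $1$, the $3$, or the $2$ of any $132$, by analysing what lies to its left and right. The lemma as worded (``can only occur'') is a necessity statement, so your argument matches the stated claim more precisely; the paper's argument, on the other hand, is the one actually invoked in the enumeration theorems that follow (where one must know that fixed points \emph{may} be freely placed at hits). In short, you and the paper together cover both halves of the biconditional that the later results need, and your half is the one the paper leaves implicit.
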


\begin{proof}
    First, let us see that it is impossible to place a fixed point in between any of the first entries. Suppose we have
    \begin{center}
    \begin{tikzpicture}
        \draw(-0.5,0) -- ++ (4,0);
    \foreach \x in {0,...,3}{
       \draw[circle,fill] (\x,0)circle[radius=1mm]node[below]{};
    }
       \draw[line width=.5mm](1,0) to[bend left=45] (2,0);
       \draw[line width=.5mm](2,0) to[bend left=45] (3,0);
    \end{tikzpicture},
\end{center}
as a sub-diagram in our permutation. If the 3-cycle is of the form $(1,2,3)$ or $(1,3,2)$ then the fixed point along with the first and third entry of the 3-cycle make a $132$.

So fixed points must occur after all of the first entries of our $3$-cycles. If we place a fixed point where the Dyck path hits the ground, it will not create a 132. Indeed, the fixed point is exactly one more than the largest element to its left that is smaller than it. So it cannot be the $3$ of a $132$.
It also cannot be the $1$ of a $132$ because all the entries to its right that are larger than it are increasing; these correspond to the $0$'s in $D_\pi$.
Finally, it cannot be the $2$ of a $132$ because any increasing pair of elements to its left are either both larger than the fixed point (first entries paired with $0$'s and $1$'s occurring after the fixed point) or both smaller (any of the other entries to the left of the fixed point).

Furthermore, we note that we can place as many fixed points as we would like into these spots where the Dyck path hits the ground because this is just introducing an increasing subsequence in these spots, but all the same arguments from Lemma \ref{lem: fixedPoints} show these fixed points do not create a $132$.
\end{proof}



We can now state the main theorem of this section. 

\begin{theorem}
    For $m\geq3$, the number of permutations $\pi\in\S_n(132)$ with $r$ fixed points and $m$ 3-cycles is
    \[
    \sum_{k=1}^m\sum_{\ell=2}^{k+1}2^{\ell-1}\binom{r+\ell}{r} M_{k-1,\ell-2}\sum_{X \in\mathcal{P}_{m,k}} C_{x_1}\cdots C_{x_k}    \]
    where $\mathcal{P}_{n,k}$ are all compositions of $m$ of length $k$ and $M_{k-1,j-2}$ is the number of Motzkin paths of length $k-1$ with $j-2$ flats at level 0. 
\end{theorem}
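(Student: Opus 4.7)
The plan is to derive the stated count by stratifying each $\pi \in \S_n(132)$ of order $3$ according to the Dyck word $D_\pi$ associated to its $3$-cycle part, indexed by the length $k \in [m]$ of the free composition $X = (x_1, \ldots, x_k) \in \mathcal{P}_{m,k}$ of $D_\pi$ and the number $\ell \in \{2, 3, \ldots, k+1\}$ of hits of $D_\pi$. The range $\ell \ge 2$ reflects the two trivial endpoint hits of any length-$2m$ Dyck word at positions $0$ and $2m$, and $\ell \le k+1$ reflects that at most one additional hit can separate consecutive free blocks (for certain compositions $\ell$ is further constrained, but the Motzkin factor below will vanish in those degenerate cases).

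Step 1: By Lemma~\ref{lem: DyckMotzkinBij}, the number of Dyck words with free composition $X$ and exactly $\ell$ hits is $M_{k-1, \ell-2}$, via the bijection sending such a Dyck word to a Motzkin path of length $k-1$ with $\ell - 2$ flats at level zero. Step 2: Given such a Dyck word, Lemma~\ref{lem: FirstCatalan} produces $2^{\ell-1} C_{x_1} \cdots C_{x_k}$ different $132$-avoiding permutations in $\S_{3m}$ composed only of $3$-cycles that realize it; the factor $2^{\ell-1}$ records the initial cycle-form choice together with one binary toggle at each of the $\ell - 2$ interior hits (Lemma~\ref{lem: dyck words}), while the Catalan factors $C_{x_i}$ record the $132$-avoiding arrangements of first entries within each free block (Lemma~\ref{lem: splitOnes} together with the proof of Lemma~\ref{lem: FirstCatalan}). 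Step 3: By Lemma~\ref{lem: fixedPoints}, the $r$ additional fixed points may be inserted only at hits of $D_\pi$, and arbitrarily many may occupy a single hit; a stars-and-bars count over the hit positions then contributes the binomial factor $\binom{r+\ell}{r}$. Multiplying these three factors and summing over $k$, $X \in \mathcal{P}_{m,k}$, and $\ell$ yields the stated expression.

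The most delicate piece is Step 3, the fixed-point accounting. Although Lemma~\ref{lem: fixedPoints} already establishes that only hits are admissible locations and that the multiplicities at each hit are unconstrained, I would need to verify that distinct fixed-point multiplicity vectors produce genuinely distinct permutations and then extract exactly the asserted binomial coefficient, being especially careful about the interaction of endpoint hits with interior hits. Steps 1 and 2 are essentially bookkeeping over previously established results: the free blocks are structurally independent so the Catalan factors multiply, the cycle-form choices at distinct interior hits are independent so the powers of $2$ combine, and the composition sum $\sum_{X} \prod_i C_{x_i}$ separates cleanly from the Motzkin factor because the bijection of Lemma~\ref{lem: DyckMotzkinBij} preserves both $k$ and $\ell$.
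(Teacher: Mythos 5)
Your proposal follows the paper's own proof essentially verbatim: the same stratification by composition length $k$ and hit count $\ell$, the same use of Lemma~\ref{lem: DyckMotzkinBij} for the Motzkin factor, Lemma~\ref{lem: FirstCatalan} for the $2^{\ell-1}C_{x_1}\cdots C_{x_k}$ factor, and Lemma~\ref{lem: fixedPoints} with a stars-and-bars count for the binomial factor. The delicate point you flag about extracting exactly $\binom{r+\ell}{r}$ is treated no more carefully in the paper, which simply asserts the distribution count, so your attempt is on equal footing with the published argument.
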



\begin{proof}
    We sum over all composition lengths $k$ of $m$ and possible number of hits $\ell$ that a Dyck word has. For each composition $X \in \mathcal{P}_{m,k}$, Lemma \ref{lem: DyckMotzkinBij} shows there are $M_{k-1,\ell-2}$ Dyck paths of this type that have $\ell$ hits. By Lemmas \ref{lem: FirstCatalan} each of these Dyck paths corresponds to $2^{\ell-1}C_{x_1}\cdots C_{x_n}$ elements in $\S_{3m}(132)$ composed only of 3-cycles.
    
    We can then place our $r$ fixed points anywhere the Dyck word has a hit by Lemma \ref{lem: fixedPoints}. The number of ways to distribute the $r$ fixed points into the $j+1$ positions is $\binom{r+j}{r}$, which proves the result. 
\end{proof}






Finally, let us determine the corresponding generating function for order 3 permutations avoiding 132. 
\begin{theorem}\label{thm:132-main}
    Let $F(t,x) = 1+xtM(t,x)$. The generating function for order $3$ permutations avoiding $132$ is
    \[
    A^{\{1,3\}}_{132}(z) = \dfrac{1}{1-z}\cdot F\bigg(\frac{2}{1-z}, c(z^3)-1\bigg)
    \]
    which is equivalent to 
      \[
    A^{\{1,3\}}_{132}(z) = \dfrac{c(z^3)}{\sqrt{c(z^3)(4-3c(z^3))} - zc(z^3)}.
    \]
\end{theorem}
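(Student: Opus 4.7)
My plan is to convert the refined enumeration formula from the preceding theorem (with $r = c_1(\pi)$ and $m = c_3(\pi)$) into a generating function by summing over all $n$, $m$, and $r$, and then simplify algebraically to reach each of the two claimed closed forms. The identity on $r$ fixed points contributes $\frac{1}{1-z}$; the remaining work concerns $m\geq 1$.

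I first sum on $r$, then on $m$. Using $\sum_{r\geq 0}\binom{r+\ell-1}{r}\,z^r = (1-z)^{-\ell}$, the fixed-point factor is absorbed, and together with $2^{\ell-1}$ this yields $\tfrac12\,t^{\ell}$ where $t=\tfrac{2}{1-z}$. Since each part $x_j\geq 1$ of a composition in $\mathcal{P}_{m,k}$ contributes an independent factor $\sum_{x\geq 1}C_x\,z^{3x} = c(z^3)-1$, summing on $m$ gives
\[
\sum_{m\geq k}z^{3m}\sum_{X\in\mathcal{P}_{m,k}}C_{x_1}\cdots C_{x_k} = (c(z^3)-1)^k.
\]
Writing $x=c(z^3)-1$ and reindexing by $j=\ell-2$, the remaining double sum becomes $\tfrac{t^2 x}{2}\sum_{k,j\geq 0}M_{k,j}\,x^k t^j = \tfrac{t^2 x}{2}\,m(t,x)$. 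Since $\tfrac{t^2}{2}=\tfrac{t}{1-z}$, this equals $\tfrac{tx}{1-z}\,m(t,x)$, and combining with the identity piece yields
\[
A^{\{1,3\}}_{132}(z)=\tfrac{1}{1-z}\bigl(1+tx\,m(t,x)\bigr)=\tfrac{1}{1-z}\,F(t,x),
\]
which is the first claimed form.

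For the second form, I substitute $m(t,x)=\tfrac{2}{1-2tx+x+\sqrt{1-2x-3x^2}}$ into $F$ at $t=\tfrac{2}{1-z}$ and $x=c(z^3)-1$. The crucial simplification is the polynomial identity $1-2(c-1)-3(c-1)^2 = c(4-3c)$ in the indeterminate $c$, which converts $\sqrt{1-2x-3x^2}$ into $\sqrt{c(z^3)(4-3c(z^3))}$. Clearing the inner $\tfrac{1}{1-z}$ factor from $m(t,x)$ and writing $c=c(z^3)$ throughout, $\tfrac{1}{1-z}\,F(t,x)$ reduces to
\[
\frac{c+\sqrt{c(4-3c)}}{(4-3c-cz)+(1-z)\sqrt{c(4-3c)}}.
\]
The main obstacle is then to verify this equals $\tfrac{c}{\sqrt{c(4-3c)}-zc}$: cross-multiplying reduces the claim to
\[
(c+\sqrt{c(4-3c)})(\sqrt{c(4-3c)}-zc) = c\bigl((4-3c-cz)+(1-z)\sqrt{c(4-3c)}\bigr),
\]
and expanding both sides confirms that the $\sqrt{c(4-3c)}$ cross-terms on each side combine to $c(1-z)\sqrt{c(4-3c)}$, while the polynomial remainders are both $4c-3c^2-c^2 z$, completing the proof.
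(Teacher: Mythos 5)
Your derivation is correct, and it rests on the same decomposition the paper uses (free compositions weighted by Catalan numbers, Motzkin paths refined by flats at level zero, a factor of $2$ and a geometric series of fixed points at each hit). The route differs slightly in presentation: the paper argues directly by the symbolic method, attaching $\tfrac{2}{1-z}$ to each flat and $\tfrac{1}{1-z}$ to the two boundary hits, whereas you obtain the generating function by summing the explicit counting formula of the preceding theorem over $r$, $m$, $k$, and $\ell$. Two remarks. First, your summation uses $\binom{r+\ell-1}{r}$ (the correct count for distributing $r$ fixed points among the $\ell$ hit positions), not the $\binom{r+\ell}{r}$ printed in the preceding theorem; your choice is the one consistent both with that theorem's own proof idea and with the final generating function, so you have in effect silently corrected an off-by-one in the paper's statement --- worth saying explicitly. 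Second, your proposal actually does more than the paper's proof: the paper asserts the equivalence of the two closed forms without computation, while you carry out the substitution $1-2x-3x^2 = c(4-3c)$ at $x=c-1$ and the cross-multiplication check
\[
\bigl(c+\sqrt{c(4-3c)}\bigr)\bigl(\sqrt{c(4-3c)}-zc\bigr) \;=\; c\Bigl((4-3c-cz)+(1-z)\sqrt{c(4-3c)}\Bigr),
\]
both of which I verified are correct. This added algebra is a genuine improvement in completeness over the published argument.
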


\begin{proof}
    Every time the Motzkin path has a flat at level zero, we are allowed to place fixed points (Lemma \ref{lem: fixedPoints}) and change 3-cycle type (Lemma \ref{lem: FirstCatalan}). We use $\frac{2}{1-z}$ at each flat of the Motzkin path to account for this.

    Lemma \ref{lem: FirstCatalan} enumerates the arrangements of the first entries by products of Catalan numbers. We have to add an additional $\frac{1}{1-z}$ to account for the fixed points we can add at the end of the Motzkin path where we cannot change our 3-cycle type. It follows that
    \[
    A^{\{1,3\}}_{132}(z) = \frac{1}{1-z}+ \frac{2}{(1-z)}(c(z^3)-1)\cdot M\bigg(\frac{2}{1-z}, c(z^3)-1\bigg)
    \]
    which is equivalent to the theorem statement.
\end{proof}

It was conjectured in \cite{BS19} that the growth rate of order 3 permutations avoiding 132 would be less than or equal to 2. Using this generating function we can confirm this conjecture by noting that the growth rate of $A^{\{1,3\}}$ is $1.89$.

\section{Open Questions}

In this paper, we found $a_n^{\{1,3\}}(132)$ and $a_n^{\{1,2,3\}}(231)$. There are some very natural follow-up questions. For example, we could ask ourselves how 2-cycles may factor into the 132-avoiding permutations. 
\begin{question}
    How many 132-avoiding permutations are there that are composed of only 3-cycles, 2-cycles, and fixed points?
\end{question}
This question seems more difficult than the equivalent question for 231, not only because the way 3-cycles and fixed points interact is more complicated, needing Dyck paths and Motzkin paths to keep track of them, but also because any 2-cycles in the permutation seem to be able to interact with the 3-cycles in interesting ways. That said, this does seem like a feasible task.

Let us also consider permutations avoiding the pattern 123. 
In the case of 3-cycles and fixed points, the answer is trivial, as stated in the proposition below.  
\begin{proposition}
    $|\S^{\{1,3\}}_n(123)| = 0$ for $n\geq 9$.
\end{proposition}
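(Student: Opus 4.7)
The plan is as follows. First, any permutation $\pi$ avoiding $123$ has at most two fixed points: three fixed points $i_1<i_2<i_3$ would satisfy $\pi_{i_1}=i_1<\pi_{i_2}=i_2<\pi_{i_3}=i_3$, forming a $123$ pattern. Since $\pi\in\S_n^{\{1,3\}}$ consists solely of $1$- and $3$-cycles, this forces $\pi$ to contain at least $\lceil(n-2)/3\rceil\geq 3$ three-cycles when $n\geq 9$. If $\pi$ avoids $123$, so does any sub-permutation (the order-isomorphism class of $\pi$ restricted to any subset of positions); in particular, restricting $\pi$ to the $9$ positions of any three of its $3$-cycles yields a permutation of $[9]$ composed of three disjoint $3$-cycles that also avoids $123$. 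It therefore suffices to show that no such $9$-permutation exists.

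The main tool is the Erd\H{o}s--Szekeres/Dilworth characterization: $\pi$ avoids $123$ if and only if its one-line notation is a union of two decreasing subsequences. Each $3$-cycle on positions $a<b<c$ has exactly one increasing pair among its three position-value pairs---namely $\{a,b\}$ if the cycle has form $(a,b,c)$ (producing a $231$ pattern at those positions) or $\{b,c\}$ if of form $(a,c,b)$ (producing $312$)---and this pair must straddle the two decreasing subsequences.

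The main step is a case analysis on the cycle $C$ containing $1$. Writing $p_1$ for the position of $1$, $123$-avoidance forces the suffix $\pi_{p_1+1},\ldots,\pi_9$ to be decreasing (any ascending pair there would form a $123$ together with $\pi_{p_1}=1$); symmetrically, the prefix before the position of $9$ must be decreasing. Writing $C=\{1,u,v\}$ with $1<u<v\leq 9$ and considering each possible form of $C$---distinguished by whether $1$ and $9$ share a cycle, by the $231$ vs.\ $312$ form of $C$, and by the values of $u,v$---these forced decreasing blocks pin down a long run of positions as a strictly decreasing sequence, leaving the remaining entries too constrained to form two further $3$-cycles. For example, if $C=\{1,2,9\}$ is of form $312$ then $\pi_1=9,\ \pi_2=1,\ \pi_9=2$, and the decreasing-suffix condition forces $\pi_3\pi_4\pi_5\pi_6\pi_7\pi_8=876543$, yielding the transpositions $(3,8)(4,7)(5,6)$ instead of two $3$-cycles.

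The main obstacle is that I do not see a single uniform argument avoiding case analysis on $C$; still, each sub-case follows the same short template---the forced decreasing run coming from the positions of $1$ and $9$ leaves no room for the remaining six entries to partition into two $3$-cycles, so they must include a $2$-cycle or a fixed point, contradicting $\pi\in\S_9^{\{1,3\}}$. With the $n=9$ case settled, the proposition for all $n\geq 9$ follows from the restriction argument of the first paragraph.
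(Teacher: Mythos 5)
Your argument is correct in outline but takes a genuinely different route from the paper's. The paper shows that a $123$-avoiding permutation can contain at most two $3$-cycles of each of the two forms and at most two fixed points, which only yields $|\S^{\{1,3\}}_n(123)|=0$ for $n>14$; the range $9\le n\le 14$ is deferred to an unexecuted ``straightforward check.'' You instead use the bound of two fixed points to force at least three $3$-cycles once $n\ge 9$, and then restrict $\pi$ to the support of any three of its $3$-cycles --- a $\pi$-invariant set, so standardization preserves both the pattern-avoidance and the cycle type --- reducing the entire proposition to the single finite claim that no element of $\S_9$ of cycle type $3+3+3$ avoids $123$. That reduction is valid and is tighter than the paper's. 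The trade-off is that this last finite claim is only illustrated on one sub-case: the forced decreasing run after the position of $1$ and the forced decreasing run before the position of $9$ do make the verification tractable, but note that when $9$ sits immediately after $1$ the two runs force $\pi$ to be a concatenation of two decreasing blocks (automatically $123$-avoiding), so that branch needs a genuine cycle-type argument rather than a contradiction with $123$-avoidance; as written, the case analysis is asserted rather than carried out, leaving your proof on essentially the same footing as the paper's deferred check. Separately, the Erd\H{o}s--Szekeres observation about each $3$-cycle's unique increasing pair straddling the two decreasing subsequences is announced as the main tool but never actually used in the sketch; either use it or drop it.
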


\begin{proof}
    It is straightforward to check the cases for $9\leq n \leq 14,$ so let $n>14.$
    Both types of 3-cycles contain an increasing sub-sequence. For 3-cycles of the form $(1,3,2)$ this means their second and third entries must be nested. But then to avoid a $123$ from their first entries for $n \geq 3$ the first entries must be decreasing. Since the second and third entries are nested, this forces their second entries to be increasing. So we cannot have more than two 3-cycles of the form $(1,3,2)$.

    An identical argument shows we cannot have more than two 3-cycles of the form $(1,2,3)$.
    We also cannot have more than two fixed points. As a result, for $n > 14$, $|\S^{\{1,3\}}_n(123)| = 0$.
\end{proof}

In fact, any permutation avoiding 123 must have at most two 3-cycles and at most two fixed points. It is well-known that in $\S_n(123)$, there are $\binom{n}{\lfloor \frac{n}{2}\rfloor}$ involutions and for even $n$, there are $\binom{n-1}{\frac{n}{2}}$ fixed point free involutions 
\cite{DRS07,SS85}. So, let us consider the following question.

\begin{question}
    How many 123-avoiding permutations  are there that are composed of only 3-cycles, 2-cycles, and fixed points?
\end{question}

It seems like since there can be at most two 3-cycles in a 123-avoiding permutation, and the answer for 2-cycles and fixed points is very nice, it should be a straightforward exercise to answer this question. However, most proofs that 123-avoiding involutions are enumerated by $\binom{n}{\lfloor \frac{n}{2}\rfloor}$ use the Robinson–Schensted–Knuth algorithm and standard Young tableaux, or symmetric Dyck paths, neither of which interact well with 3-cycles. 


Finally, we could also consider the pattern 321. In \cite{AG21}, the authors find a formula for the number of permutations avoiding 321 composed only of 3-cycles, but the answer is a sum over Dyck paths. It would be nice to find a closed form or generating function for these permutations, as well as order three 321-avoiding permutations. 

\begin{question}
    Is there a closed form for the number of 321-avoiding permutations composed only of 3-cycles? How many 321-avoiding permutations are there of order 3? How many  are there that are composed of only 3-cycles, 2-cycles, and fixed points? 
\end{question}

This question seems especially difficult, so we include this conjecture regarding the relationship of 321-avoiding permutations to other pattern-avoiding permutations. See Tables~\ref{tab:order3} and \ref{tab:1-2-3}  for the numerical evidence for this conjecture. 

\begin{conjecture}
    For $n\geq 1,$ we have
    \[ a_n^{\{1,3\}}(132) \leq a_n^{\{1,3\}}(321)\]
    and 
    \[a_n^{\{1,2,3\}}(123) \leq a_n^{\{1,2,3\}}(132)\leq a_n^{\{1,2,3\}}(321)\leq a_n^{\{1,2,3\}}(231).\]
\end{conjecture}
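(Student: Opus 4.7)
The plan is to prove each of the four inequalities by constructing an explicit cycle-type-preserving injection between the relevant sets of pattern-avoiding permutations. Concretely, for an inequality of the form $a_n^S(\sigma_1) \leq a_n^S(\sigma_2)$, I would exhibit an injective map $\Phi : \S_n^S(\sigma_1) \hookrightarrow \S_n^S(\sigma_2)$ satisfying $c_k(\Phi(\pi)) = c_k(\pi)$ for every $k$.

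The first step is to extend the structural analysis of Sections 3 and 4 to the pattern-avoiding sets not yet characterized, namely $\S_n^{\{1,3\}}(321)$ and $\S_n^{\{1,2,3\}}(\sigma)$ for $\sigma \in \{123,132,321\}$. For each set I would classify the forbidden local interactions of fixed points, 2-cycles, and 3-cycles (mimicking Lemmas \ref{lem:crossing 3 cycles}--\ref{lem:forbidden}) and encode the permutations via a tractable combinatorial object, such as a decorated Dyck or Motzkin path in the style of Section 4. Observe that each type of 3-cycle, read as a pattern in one-line notation, is either $231$ or $312$; both forms already contain an ascending pair, so $321$-avoidance is a comparatively local condition and the corresponding structural description should be manageable. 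The $\{1,2,3\}$ case for $123$-avoidance will be even more restrictive, consistent with the proposition in Section 5 that already bounds the number of 3-cycles by two.

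With those descriptions in hand, I would construct the four injections explicitly. For $\S_n^{\{1,3\}}(132)\hookrightarrow \S_n^{\{1,3\}}(321)$, I would try to map each Dyck--Motzkin configuration from Section 4 to a specific 321-avoiding configuration with the same free composition and number of hits. For the chain of $\{1,2,3\}$ injections, I would define each $\Phi$ by a local surgery on the arc diagram that replaces any sub-arrangement permitted by $\sigma_1$-avoidance but forbidden by $\sigma_2$-avoidance with an allowed one, while leaving the multiset $(c_1,c_2,c_3)$ invariant. The tables \ref{tab:order3} and \ref{tab:1-2-3} would serve at every stage as a sanity check on both size and injectivity of the candidate.

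The principal obstacle is that cycle-type-preserving bijections are rare: classical Wilf-equivalence bijections such as Simion--Schmidt or those built from RSK typically do not preserve cycle structure, so genuinely new injections must be designed. A parallel fallback approach is to derive multivariate generating functions refined by $c_1,c_2,c_3$ (in the style of Theorem \ref{thm:231-main}) for all eight sets and compare coefficients directly; however, the paper notes that even enumerating $\S_n^{\{3\}}(321)$ in closed form is open, so this route is likely no easier. I expect the hardest single inequality to be $a_n^{\{1,2,3\}}(321) \leq a_n^{\{1,2,3\}}(231)$, where the structures on both sides mix all three cycle types in intricate ways and neither side reduces to a simpler Wilf class.
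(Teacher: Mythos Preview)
The statement you are attempting to prove is a \emph{conjecture} in the paper; the authors offer no proof, only the numerical evidence in Tables~\ref{tab:order3} and~\ref{tab:1-2-3}. So there is no ``paper's own proof'' to compare against---your proposal is an attempt to resolve an open problem.

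That said, what you have written is a research outline, not a proof. You describe a strategy (cycle-type-preserving injections built from a structural classification of each avoidance class) but carry out none of the steps: you do not actually classify the forbidden configurations for $\S_n^{\{1,2,3\}}(321)$ or $\S_n^{\{1,2,3\}}(123)$, you do not exhibit any of the four injections, and you explicitly flag the main obstacle (that standard Wilf-equivalence bijections do not preserve cycle type) without overcoming it. The fallback of comparing refined generating functions is likewise only sketched, and you yourself note that even the unrefined $\S_n^{\{3\}}(321)$ enumeration is open. A proof proposal that ends with ``I expect the hardest single inequality to be \ldots'' is an honest assessment of the difficulty, but it is not a proof; every substantive claim remains to be established.
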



\begin{table}[ht]
\centering\makegapedcells
\renewcommand{\arraystretch}{1.6}
\begin{tabular}{|c|c|c|c|c|c|c|c|c|c|c|c|c|}
\hline
$\sigma$ & 1& 2 & 3 &4 & 5 & 6 & 7 & 8 & 9 &10 & 11 &12 \\ \hline\hline
123 & 1 & 1 & 2 & 4 & 2 & 6 & 12 & 6 & 0 & 0 & 0 &0 \\ \hline
132 & 1 & 1 & 3 & 5 & 7 & 17 & 31 & 49 & 107 & 201 & 339 & 699 \\ \hline
231 & 1 & 1 & 2 & 5 & 9 & 16 & 32 & 61 & 114 & 219  & 418  & 792 \\ \hline
321 & 1 & 1 & 3 & 5 & 7 & 19 & 35 & 55 & 139 & 267 & 447 & 1075 \\ \hline
\end{tabular}
\caption{The number of order 3 permutations (i.e., those composed only of 3-cycles and fixed points) that avoid $\sigma$ for $n\in[12].$}
\label{tab:order3}
\end{table}

\begin{table}[ht]
\centering\makegapedcells
\renewcommand{\arraystretch}{1.6}
\begin{tabular}{|c|c|c|c|c|c|c|c|c|c|c|c|c|}
\hline
$\sigma$ & 1& 2 & 3 &4 & 5 & 6 & 7 & 8 & 9 &10 & 11 &12 \\ \hline\hline
123 & 1 & 2 & 5 & 10 & 22 & 50 & 103 & 222 &432 & 898 & 1700 & 3482 \\ \hline
132 & 1 & 2 & 5 & 10 & 24 & 56 & 127 & 300 & 698 & 1620 & 3794 & 8848 \\ \hline
231 & 1 & 2 & 5 & 12  & 29 & 71 & 171 & 411 & 990 & 2380 & 5722 & 13765 \\ \hline
321 & 1 & 2 & 5 & 10 & 24 & 58 & 133 & 324 & 782 & 1868 & 4550 & 11030 \\ \hline
\end{tabular}
\caption{The number of $\sigma$-avoiding permutations composed only of 3-cycles, 2-cycles, and fixed points for $n\in[12].$}
\label{tab:1-2-3}
\end{table}

\end{document}